\newtheorem{theorem}{Theorem}
\newtheorem{lemma}{Lemma}
\newtheorem{cor}{Corollary}
\theoremstyle{definition}
\def\min{\mathop{\mathrm{min}}}
\newcommand{\C}{\mathbb{C}}
\newcommand{\N}{\mathbb{N}}
\newcommand{\R}{\mathbb{R}}
\begin{document}
\title{{A uniqueness problem for entire functions related to Br\"uck's conjecture}}
\author{Nguyen Van Thin}

\address{Department of Mathematics, Thai Nguyen University of Education, Luong Ngoc Quyen street, Thai Nguyen city, Viet Nam.}
\email{nguyenvanthintn@gmail.com}
\author{Ha Tran Phuong}

\address{Department of Mathematics, Thai Nguyen University of Education, Luong Ngoc Quyen street, Thai Nguyen city, Viet Nam.}
\email{hatranphuong@yahoo.com}

\thanks{2010 {\it Mathematics Subject Classification.} Primary 30D45, 30D35.}
\thanks{Key words:   Br\"uck's Conjecture, Meromorphic functions, Nevanlinna theory, 
Normal family.}

\begin{abstract}
In this paper, we prove a normal criteria for family of meromorphic functions. As an application of that result, we establish
  a uniqueness theorem for entire function concerning a conjecture of R. Br\"uck. The above uniqueness theorem is an improvement of a problem studied by L. Z. Yang et. al \cite{YZ}. However, our method differs  the method of L. Z. Yang et. al \cite{YZ}. We mainly use normal family theory and combine it with Nevanlinna theory instead of using only the Nevanlinna theory as in \cite{YZ}.
\end{abstract}
\baselineskip=16truept 
\maketitle 
\pagestyle{myheadings}
\markboth{}{}

\section{ Introduction}
\def\theequation{1.\arabic{equation}}
\setcounter{equation}{0} 
Let $D$  be a domain in the complex plane $\C$ and $\mathcal F$ be a family of meromorphic functions in $D.$ The family $\mathcal F$ is said to be normal in $D,$ in the sense of Montel, if for any sequence $\{f_v\}\subset \mathcal F,$ there exists a subsequence $\{f_{v_i}\}$ such that $\{f_{v_i}\}$ converges spherically locally uniformly in $D,$ to a meromorphic function
 or $\infty.$

Let $f$ and $g$ be two nonconstant meromorphic functions. Let $a$ and $b$ be two complex numbers. If $g-b=0$ whenever $f-a=0$, we write $f=a\Rightarrow g=b$. If $f=a\Rightarrow g=b$ and $g=b\Rightarrow f=a$, we write $f=a \Leftrightarrow g=b.$ If $f-a$ and $g-b$ have the same zeros and poles (counting multiplicity), then we denote by $f-a \rightleftharpoons g-b.$

Let $f$ be a meromorphic function in  the complex plane $\C$, we recall that the {\it hyper-order} of $f$ is defined by $$\sigma_2(f)=\limsup_{r \to \infty} \dfrac{\log \log T(r, f)}{\log r}.$$

The following conjecture proposed by  R. Br\"uck \cite{B}.

\noindent{\bf Conjecture.} {\it Let $f$ be a nonconstant entire function such that the hyper-order $\sigma_2(f)$ of $f$ is not a positive integer and $\sigma_2(f)<\infty$. If $f$ and $f'$
share $a$ finite value $a-CM$, then $$\dfrac{f'-a}{f-a}=c,$$ where $c$ is a nonzero constant.}

The conjecture in the case of $a=0$ has been proved by Br\"uck  in \cite{B}. From differential equations
$$ \dfrac{f'-a}{f-a}=e^{z^n},  \hspace{0.3cm}  \dfrac{f'-a}{f-a}=e^{e^{z^n}},$$
we see that this conjecture does not hold if $\sigma_2(f)$ is a positive integer or infinite. 
The conjecture in the case of $f$, a function of finite order, has been proved by Gundersen and Yang in \cite{GY}, 
in the case of $f$, a function of infinite order with $\sigma_2(f)<\dfrac{1}{2}$ has been proved by Chen and Shon in \cite{CS}. 
However, the conjecture in the case $\sigma_2(f)\geqslant\dfrac{1}{2}$ is still open.

It is interesting to ask what happens if $f$ is replaced by $f^n$ in the  Br\"uck's conjecture. In 2008, L. Z. Yang and J. L. Zhang found out a result relating to Br\"uck's conjecture as following.
\begin{theorem}\cite{YZ}\label{th1} Let $f$ be a nonconstant entire function, $n\geqslant 7$ be an integer, and $F=f^n$. If $F$ and $F'$ share 1 $CM$, then $F\equiv F'$ and $f$ assumes the form 
$$f=ce^{{z}/{n}},$$
where $c$ is a nonzero constant.
\end{theorem}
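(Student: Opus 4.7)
The plan is to prove Theorem~\ref{th1} by pure Nevanlinna theory, in the spirit of the original argument of Yang and Zhang in \cite{YZ}. The first move converts the CM-sharing hypothesis into an analytic identity: since $F$ and $F'$ are entire and share the value $1$ CM, the ratio $\varphi = (F'-1)/(F-1)$ is an entire function with no zeros, hence $\varphi = e^{h}$ for some entire $h$. This yields the master identity
$$F' - 1 \;=\; e^{h}\,(F - 1),$$
on which every subsequent estimate will rest.

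Next I would introduce the standard auxiliary function
$$H \;=\; \frac{F''}{F'-1} \;-\; \frac{F'}{F-1}.$$
A short residue calculation shows that at a simple common $1$-point of $F$ and $F'$ the two terms on the right each contribute residue $1$, so their poles cancel and $H$ is regular there. Hence $N(r,H)$ is supported on the multiple $1$-points of $F$ and on zeros of $F'$ that are not zeros of $F-1$. The logarithmic derivative lemma gives $m(r,H) = S(r,F)$, and the structure $F = f^{n}$ enforces the multiplicity bound
$$\overline{N}(r, 1/F) \;\le\; \tfrac{1}{n}\, T(r,F) + S(r,F),$$
since every zero of $F$ has multiplicity at least $n$. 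Analogous bounds control the zeros of $F'$ in terms of $T(r,f)$.

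The core step is then the Nevanlinna second main theorem applied to $F$ at the values $0$ and $1$,
$$T(r,F) \;\le\; \overline{N}(r, 1/F) \;+\; \overline{N}\bigl(r, 1/(F-1)\bigr) \;+\; S(r,F),$$
plugged against the estimates above. The arithmetic should collapse to an inequality of the shape $(1 - c/n)\,T(r,F) \le S(r,F)$ for a positive constant $c$, which is contradictory once $n \ge 7$ unless the auxiliary relation degenerates, i.e.\ $H \equiv 0$. Unwinding $H \equiv 0$ together with the master identity forces $F' \equiv F$; integration then yields $F = Ce^{z}$, and extracting an $n$-th root gives the claimed form $f = c e^{z/n}$ with $c^{n} = C$.

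The main obstacle I anticipate is the book-keeping in the second step: one has to account precisely for the zeros of $F' = nf^{n-1}f'$ contributed by $f^{n-1}$ and by $f'$, weigh them against the multiplicities at common $1$-points, and show that the total counting-function contribution is absorbed in a fraction of $T(r,F)$ small enough that the resulting inequality genuinely fails for $n \ge 7$. This tight accounting is exactly where the constant $7$ enters; relaxing it would require either a sharper auxiliary test function or the normal-family rescaling that the present paper proposes to add on top of the Nevanlinna machinery.
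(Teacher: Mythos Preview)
Theorem~\ref{th1} is not proved in this paper at all: it is quoted verbatim from Yang--Zhang \cite{YZ} as background, and the paper's own contribution is the stronger Theorem~\ref{th2} (whose special case, Corollary~\ref{cl1}, lowers the bound to $n\ge 2$). So there is no ``paper's own proof of Theorem~\ref{th1}'' to compare your plan against. Your sketch --- the master identity $(F'-1)/(F-1)=e^{h}$, the auxiliary $H=F''/(F'-1)-F'/(F-1)$, the second main theorem for $F$ at $0$ and $1$, and the multiplicity bound $\overline N(r,1/F)\le \tfrac{1}{n}T(r,F)$ coming from $F=f^n$ --- is indeed the standard pure-Nevanlinna route and is in the spirit of \cite{YZ}; you correctly identify that the constant $7$ emerges from the counting-function bookkeeping, and you even anticipate in your last sentence that normal families are what this paper adds.

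What the present paper actually does, for the record, is entirely different in architecture. It first proves a normality criterion (Theorem~\ref{th3}) via Zalcman's lemma, using Hinchliffe's differential-polynomial estimate (Lemma~\ref{lm2.3}) and a rational-function case analysis (Lemma~\ref{lm2.5}) to rule out the rescaled limit. Then, for Theorem~\ref{th2}, it applies this criterion to the translation family $\{f(z+\omega)\}_{\omega\in\mathbb C}$ to force $f^{\#}$ bounded, hence $\sigma(f)\le 1$; the CM-sharing hypothesis then gives $(F'-b)/(F-a)=e^{\alpha}$ with $\alpha$ linear, and a Wiman--Valiron argument (Lemmas~\ref{lm2.6}--\ref{lm2.7}) along a maximum-modulus sequence shows $|\alpha(z_n)|=O(\log r_n)$, forcing $\alpha$ constant. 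This machinery is what buys the improvement from $n\ge 7$ down to $n\ge 2$; your plan, as you note yourself, cannot reach that range.
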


Our result concerning Br\"uck's conjecture are shown as following.
\begin{theorem}\label{th2}
 Let $n\in \N$ and $k, n_i,t_i \in \N^{*}, i= 1, \dots,k $ satisfy one of the following conditions:
\begin{align*}
&1) \ k=1, \ n=0, \ n_1\geqslant t_1+1;\\
&2) \ n\geqslant 1\hspace{0.2cm} \text{or}\hspace{0.2cm}k\geqslant 2, \ n_j \geqslant t_j, \ n+\sum_{j=1}^{k}n_j\geqslant \sum_{j=1}^{k}t_j+2.
\end{align*}
Let $a$ and $b$ be two finite nonzero values and $f$ be a nonconstant entire function. If
 $f^{n+n_1+\dots+n_k}=a\rightleftharpoons f^n(f^{n_1})^{(t_1)}\dots(f^{n_k})^{(t_k)}=b$, then
$$\dfrac{f^n(f^{n_1})^{(t_1)}\dots(f^{n_k})^{(t_k)}-b}{f^{n+n_1+\dots+n_k}-a}=c,$$
where $c$ is a nonzero constant. Specially, if $a=b$ then $f=c_1e^{tz}$, where $c_1$ and $t$ are nonzero constants and $t$ is satisfied by $(tn_1)^{t_1}\dots(tn_k)^{t_k}=1$.
\end{theorem}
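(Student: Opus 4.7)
Set $N = n + n_1 + \cdots + n_k$, $F = f^N$ and
\[
G = f^n (f^{n_1})^{(t_1)} \cdots (f^{n_k})^{(t_k)}.
\]
Since $f$ is entire, so are $F - a$ and $G - b$, and the hypothesis $f^N = a \rightleftharpoons G = b$ says that these two entire functions have exactly the same zeros with the same multiplicities. Hence the quotient $\psi := (G - b)/(F - a)$ is a zero-free entire function, so $\psi = e^h$ for some entire $h$, and the first conclusion of the theorem becomes equivalent to $h$ being constant.

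To show $h$ is constant I would argue by contradiction. Assuming $h$ is nonconstant, the plan is to apply the normal criterion proved in the first part of the paper to a Zalcman-type rescaling of $f$, chosen so that the numerical hypotheses $n_j \geq t_j$ and $n + \sum n_j \geq \sum t_j + 2$ translate into exactly the differential/sharing conditions appearing in that normal criterion. Normality of the rescaled family, combined with the lemma on the logarithmic derivative applied to the functional equation $G - b = e^h (F - a)$, should yield the characteristic bound $T(r, e^h) = S(r, f)$. Since $T(r, F) = N\,T(r, f) + O(1)$ and $T(r, G) = O(T(r, f))$, the same functional equation then rules out any nontrivial growth of $e^h$, forcing $h$ to be constant and hence $G - b = c(F - a)$ for some nonzero $c \in \C$. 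This middle passage is the main obstacle: it is precisely where the normal family methodology substitutes for the purely Nevanlinna-theoretic argument of \cite{YZ}, and the substitution is what permits the weaker numerical hypotheses of the present theorem.

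For the special case $a = b$, the identity reads
\[
f^n \prod_{j=1}^{k}(f^{n_j})^{(t_j)} = c\, f^N + a(1-c).
\]
If $c \neq 1$, the right-hand side vanishes at some $z_0$; a local order-of-zero comparison at $z_0$, using $n_j \geq t_j$, contradicts the factorisation of the left-hand side, so $c = 1$. The remaining identity $\prod_{j}(f^{n_j})^{(t_j)} = f^{n_1 + \cdots + n_k}$, combined with Clunie-type estimates (or the first main theorem applied to $1/f$), shows that $f$ omits the value $0$ and is of order one, whence $f(z) = c_1 e^{tz}$ for nonzero constants $c_1, t$. Substituting this exponential form into the identity and matching coefficients of the single surviving exponential gives $(tn_1)^{t_1}\cdots(tn_k)^{t_k} = 1$, which completes the theorem.
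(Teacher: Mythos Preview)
Your outline has the right overall architecture --- write the quotient as $e^{h}$ and show $h$ is constant --- but the middle passage, which you yourself flag as the main obstacle, is not correctly described, and the route you sketch is not the one the paper takes.

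First, the use of the normal criterion. You speak of applying Theorem~\ref{th3} ``to a Zalcman-type rescaling of $f$''. That is backwards: Zalcman's lemma lives \emph{inside} the proof of Theorem~\ref{th3}. What the paper actually does is apply Theorem~\ref{th3} to the family of translates
\[
\mathcal F=\{\,g_\omega(z)=f(z+\omega):\omega\in\C\,\}\quad\text{on the unit disk}.
\]
Each $g_\omega$ inherits the sharing hypothesis, so $\mathcal F$ is normal; by Marty's theorem the spherical derivative $f^{\#}$ is bounded on all of $\C$, and then Lemma~\ref{L2} (Clunie--Hayman) gives $\sigma(f)\le 1$. This step --- normality of translates $\Rightarrow$ order at most one --- is the whole point of invoking the normal criterion, and it is absent from your plan.

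Second, the passage from $\sigma(f)\le 1$ to ``$h$ is constant''. You propose to get $T(r,e^{h})=S(r,f)$ from the logarithmic-derivative lemma, but you do not say how, and it is not clear that such a bound follows from $e^{h}=(G-b)/(F-a)$; the logarithmic-derivative lemma controls $m(r,G/F)$, not $m\bigl(r,(G-b)/(F-a)\bigr)$. The paper does something quite different: from $T(r,e^{\alpha})=O(T(r,f))$ and $\sigma(f)\le 1$ one only gets that $\alpha$ is a polynomial of degree $\le 1$. To kill the possible linear term the paper uses Wiman--Valiron theory (Lemmas~\ref{lm2.6}--\ref{lm2.7}): along a maximum-modulus sequence $z_n$ one has $(f^{n_j})^{(t_j)}/f^{n_j}\sim\bigl(\nu(r_n,f)/z_n\bigr)^{t_j}$, hence $|\alpha(z_n)|=O(\log r_n)$, which forces the linear polynomial $\alpha$ to be constant. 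This Wiman--Valiron step is the genuine replacement for the Nevanlinna argument of \cite{YZ}, and it is missing from your proposal.

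Third, your argument for $c=1$ when $a=b$ does not work as stated. If $c\ne 1$ and $z_0$ is a zero of $cF+a(1-c)$, then $F(z_0)=a(c-1)/c\ne 0$, so $f(z_0)\ne 0$; thus $G(z_0)=0$ merely says some factor $(f^{n_j})^{(t_j)}$ vanishes at $z_0$, and no ``order-of-zero'' contradiction with $n_j\ge t_j$ arises. The paper instead produces (via Lemma~\ref{lm2.4}) a point $\xi_0$ with $G(\xi_0)=b$ and uses the sharing-with-multiplicity hypothesis at $\xi_0$ to conclude $c=1$; once $c=1$ one has $G\equiv F$, so $f$ is zero-free and, being entire of order $\le 1$, must be $c_1e^{tz}$, whence the stated relation on $t$.
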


As a special case, if we take $n=0, \ k=1, \ t_1=1$ in Theorem \ref{th2},  then we have:

\begin{cor} \label{cl1} Let $f$ be a nonconstant entire function, $n \geqslant 2$ be an integer, and $F=f^{n}$. 
If $F$ and $F'$ share 1 $CM$, then $F\equiv F'$, and $f$ assumes the form 
$$f=ce^{{z}/{n}},$$
where $c$ is a nonzero constant.
\end{cor}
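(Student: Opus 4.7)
The plan is to deduce Corollary \ref{cl1} directly from Theorem \ref{th2} by specializing the parameters in the theorem to the ones suggested in the corollary's preamble. Concretely, I would set the theorem's parameters to $n = 0$, $k = 1$, $t_1 = 1$, $n_1 = n$ (where $n \geqslant 2$ denotes the integer from the corollary), and $a = b = 1$.

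First I would verify that hypothesis (1) of Theorem \ref{th2} is satisfied. With $k = 1$ and the theorem's $n$ set to $0$, condition (1) requires $n_1 \geqslant t_1 + 1 = 2$, which is exactly the standing assumption $n \geqslant 2$ of the corollary. Next I would translate the two sides of the sharing relation under this specialization. The exponent $n + n_1 + \dots + n_k$ collapses to $n$, so the left side becomes $f^n = F$; on the other hand $f^n (f^{n_1})^{(t_1)} \dots (f^{n_k})^{(t_k)}$ reduces to $(f^n)' = F'$. The hypothesis that $F$ and $F'$ share the value $1$ CM is therefore exactly the sharing relation $f^{n+n_1+\dots+n_k} = 1 \rightleftharpoons f^n(f^{n_1})^{(t_1)}\dots(f^{n_k})^{(t_k)} = 1$ appearing in Theorem \ref{th2}.

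Applying Theorem \ref{th2} then produces a nonzero constant $c$ with $(F'-1)/(F-1) = c$; moreover, since $a = b = 1$, the "specially" clause of Theorem \ref{th2} yields $f = c_1 e^{tz}$ for nonzero constants $c_1, t$ with $(tn)^{1} = 1$, i.e.\ $t = 1/n$. Hence $f = c_1 e^{z/n}$, which is the stated form.

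Finally, to obtain $F \equiv F'$ I would simply substitute: from $f = c_1 e^{z/n}$ one computes $F = f^n = c_1^n e^z$ and $F' = c_1^n e^z$, so $F \equiv F'$ (equivalently, the constant $c$ produced by Theorem \ref{th2} must equal $1$). The main "obstacle," such as it is, is purely notational: the only care required is to align the indices of Theorem \ref{th2} correctly with those of the corollary and to check that the arithmetic hypothesis $n_1 \geqslant t_1 + 1$ translates exactly to $n \geqslant 2$.
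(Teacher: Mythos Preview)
Your proposal is correct and follows exactly the approach indicated in the paper: the corollary is stated there as the specialization of Theorem~\ref{th2} to $n=0$, $k=1$, $t_1=1$ (with the corollary's $n$ playing the role of $n_1$) and $a=b=1$. Your verification that condition~(1) of Theorem~\ref{th2} becomes $n\geqslant 2$, that the sharing relation becomes $F=1\rightleftharpoons F'=1$, and that the ``specially'' clause forces $t=1/n$ and hence $F\equiv F'$, spells out precisely the details the paper leaves implicit.
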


Note that, the condition of $n$ in Colorrary \ref{cl1} is $n \geqslant 2$, and in Theorem \ref{th1} is $n \geqslant 7$. Thus Theorem \ref{th2} is an improvement of Theorem 1 of Yang and Zhang. In order to prove Theorem \ref{th2}, we need to use the following result about normal family of meromorphic functions.

\begin{theorem}\label{th3}
 Let $\mathcal F$ be a family of meromorphic functions in a complex domain $D$. Let
 $a $ and $b$ be two complex numbers such that $b\ne 0,$ let $n \in \N$, $n_j, t_j, k \in \N^{*}$, $(j=1,2,\dots,k)$ satisfy
\begin{align}\label{them1}
n_j\geqslant t_j, n+\sum_{j=1}^{k}n_j\geqslant \sum_{j=1}^{k}t_j+3,
\end{align}
and
\begin{align}\label{them2}
f^{n+n_1+\dots+n_k}&=a\Leftrightarrow f^n(f^{n_1})^{(t_1)}\dots(f^{n_k})^{(t_k)} =b
\end{align}
for all $f \in \mathcal F.$ Then $\mathcal F$ is a normal family. Furthermore, if $\mathcal F$ is a family of holomorphic functions, then the statement holds when (\ref{them1}) is replaced by one of the following conditions:
\begin{align}
& k=1, \ n=0, \ n_1\geqslant t_1+1;\\
&n\geqslant1  \hspace{0.2cm}\text{or} \hspace{0.2cm} k\geqslant 2, n_j\geqslant t_j, n+\sum_{j=1}^{k}n_j\geqslant \sum_{j=1}^{k}t_j+2.
\end{align}
\end{theorem}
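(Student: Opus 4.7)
The plan is a standard Zalcman--Pang rescaling argument combined with Nevanlinna theory. Suppose, for contradiction, that $\mathcal{F}$ is not normal at some $z_0 \in D$. Motivated by the homogeneity of the differential monomial $f^n(f^{n_1})^{(t_1)}\cdots(f^{n_k})^{(t_k)}$ relative to $f^{n+n_1+\cdots+n_k}$, I take the scaling exponent $\alpha = \frac{\sum_j t_j}{n + \sum_j n_j}$, which lies in $(0,1)$ by the inequality in (\ref{them1}). The Zalcman--Pang rescaling lemma then produces $f_\nu \in \mathcal{F}$, $z_\nu \to z_0$, and $\rho_\nu \to 0^+$ such that
$$
g_\nu(\zeta) := \rho_\nu^{-\alpha}\, f_\nu(z_\nu + \rho_\nu \zeta)
$$
converges spherically, locally uniformly on $\mathbb{C}$, to a non-constant meromorphic function $g$ of finite order (entire in the holomorphic-family case). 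With this $\alpha$, the chain rule converts (\ref{them2}) into the rescaled biconditional
$$
\rho_\nu^{\sum t_j}\, g_\nu^{\,n+\sum n_j}(\zeta) = a \;\Longleftrightarrow\; g_\nu^{\,n}(\zeta)\prod_j (g_\nu^{n_j})^{(t_j)}(\zeta) = b.
$$

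I next claim that $g^n\prod_j (g^{n_j})^{(t_j)} - b$ has no zeros on $\mathbb{C}$. Assuming $g^n\prod_j (g^{n_j})^{(t_j)} \not\equiv b$, if $\zeta_0$ were a finite point of $g$ at which this expression vanishes, Hurwitz's theorem applied to $g_\nu^n\prod_j(g_\nu^{n_j})^{(t_j)} - b$ would yield $\zeta_\nu \to \zeta_0$ with $g_\nu^n\prod_j(g_\nu^{n_j})^{(t_j)}(\zeta_\nu) = b$. The rescaled biconditional then forces $g_\nu(\zeta_\nu)^{n+\sum n_j} = a\,\rho_\nu^{-\sum t_j}$, which tends to $\infty$ (when $a\neq 0$) and contradicts $g_\nu(\zeta_\nu)\to g(\zeta_0)$ finite. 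Because $n_j\geq t_j$ guarantees that a pole of $g$ is a pole of each factor $(g^{n_j})^{(t_j)}$, the expression $g^n\prod_j(g^{n_j})^{(t_j)}-b$ is infinite at every pole of $g$, and the claim holds globally. The edge case $a=0$ follows by a parallel argument, since the sharing then forces each $f\in\mathcal{F}$ to be zero-free under the stated hypotheses.

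Finally, I rule out the two remaining possibilities for $g$. If $g^n\prod_j(g^{n_j})^{(t_j)}\equiv b$, the lemma on logarithmic derivatives applied to each $(g^{n_j})^{(t_j)}/g^{n_j}$ gives $(n+\sum n_j)T(r,g)=S(r,g)+O(1)$, forcing $g$ to be rational; combined with $b\neq 0$, the constraints $n_j\geq t_j$, and the inequality (\ref{them1}) (or its entire-case replacements), a short case analysis rules out any non-constant rational solution. Otherwise $g^n\prod_j(g^{n_j})^{(t_j)}-b$ is a nontrivial zero-free meromorphic function on $\mathbb{C}$, and I would invoke a Hayman-type theorem on differential monomials asserting that under the gap $n+\sum n_j-\sum t_j\geq 3$ (respectively $\geq 2$ for entire $g$) the monomial $f^n\prod_j(f^{n_j})^{(t_j)}$ must assume every non-zero finite value, yielding the final contradiction. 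The principal obstacle is this Hayman-type input: locating, or proving ad hoc via Nevanlinna's second main theorem applied to $g^n\prod_j(g^{n_j})^{(t_j)}$, the precise value-distribution statement under the exact combinatorics of the theorem; the numerical thresholds $3$ and $2$ are tight precisely because they are what absorbs the pole and ramification contributions appearing in the logarithmic-derivative estimates.
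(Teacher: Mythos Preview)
Your proposal is correct and follows essentially the same route as the paper: Zalcman--Pang rescaling with exponent $\alpha=\frac{\sum t_j}{n+\sum n_j}$, then a contradiction between the sharing hypothesis (which, via Hurwitz, forbids zeros of $g^n\prod_j(g^{n_j})^{(t_j)}-b$) and a Hayman-type value-distribution statement (which forces such zeros). The paper supplies the Hayman-type input you flag as the ``principal obstacle'' explicitly: Hinchliffe's inequality (Lemma~\ref{lm2.3}) handles the transcendental case under the gaps $\ge 3$ (meromorphic) and $\ge 2$ (entire), a separate computation (Lemma~\ref{lm2.5}) handles non-constant rational $g$, and the borderline entire case $k=1$, $n=0$, $n_1=t_1+1$ is covered by Hennekemper's theorem.

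The only notable deviation is in disposing of the degenerate possibility $g^n\prod_j(g^{n_j})^{(t_j)}\equiv b$: you force $g$ rational via the logarithmic-derivative lemma and then argue case-by-case, whereas the paper exploits the Zalcman normalization $g^{\#}\le 1$ (so $g$ has order at most $2$, and at most $1$ when entire), observes that $g$ must be zero-free and pole-free, writes $g=e^{c\xi+d}$, and checks directly that no exponential satisfies the identity. Both arguments are short and valid; yours is slightly more self-contained in that it does not use the spherical-derivative bound, while the paper's avoids any rational case analysis.
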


\section{ Some Lemmas}
\def\theequation{2.\arabic{equation}}
\setcounter{equation}{0} 
In order to prove the above theorems, we need the following lemmas.
\begin{lemma}[Zalcman's Lemma] \cite{Z} \label{L1}  Let $\mathcal F$ be a family of meromorphic functions defined in the open unit disc $\bigtriangleup = \{ z \in \mathbb C: |z| < 1\}.$  Then if $\mathcal F$ is not normal at a point $z_0\in\bigtriangleup,$ there exist, for each real number $\alpha$ satisfying $-1<\alpha<1,$

$1)$ a real number $r,\;0<r<1$ and points $z_n,\;|z_n|<r,$ $z_n\to z_0,$

$2)$ positive numbers $\rho_n,\rho_n\to 0^+,$

$3)$ functions $f_n,\;f_n\in\mathcal F$ such that
$$g_n(\xi)=\frac{f_n(z_n+\rho_n\xi)}{\rho_n^\alpha}\to g(\xi)$$
spherically uniformly on compact subsets of $\C,$ where $g(\xi)$ is a non-constant meromorphic function and $g^{\#}(\xi)\leqslant g^{\#}(0)=1.$ Moreover, the order of $g$ is not greater than $2.$ Here, as usual, $g^\#(z)=\frac{|g'(z)|}{1+|g(z)|^2}$ is the spherical derivative.
\end{lemma}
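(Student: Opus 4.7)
The proof combines Marty's criterion with a rescaling-and-extraction argument (Zalcman's original construction, refined by Pang to accommodate the parameter $\alpha \in (-1,1)$). The overall plan is to convert non-normality into a blow-up of a suitable rescaled spherical derivative, to localize that blow-up by maximizing a carefully chosen continuous auxiliary function on a closed sub-disc of $\bigtriangleup$, to rescale by the matching scale, and to extract a normal limit on $\C$.

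First, by Marty's criterion, the non-normality of $\mathcal{F}$ at $z_0$ produces $\{f_n\} \subset \mathcal{F}$ and $\tilde z_n \to z_0$ with $f_n^\#(\tilde z_n) \to \infty$. Fix $r$ with $0 < r < 1 - |z_0|$ so that $\{|z-z_0| \leq r\} \subset \bigtriangleup$, and on this closed disc I would maximize the continuous function
$$
\Phi_n(z) \;:=\; (r - |z - z_0|)^{1+\alpha}\, \Psi_n(z),
$$
where $\Psi_n(z)$ is the non-negative quantity governing $g_n^\#$ under the $\alpha$-rescaling (for $\alpha = 0$ one has $\Psi_n = f_n^\#$; for general $\alpha$, $\Psi_n$ is the companion quantity arising in $g_n^\#(\xi) = \rho_n^{1+\alpha}|f_n'|/(\rho_n^{2\alpha}+|f_n|^2)$). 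Let $z_n$ be a maximizer; the inequality $\Phi_n(z_n) \geq \Phi_n(\tilde z_n) \to \infty$ combined with the uniform estimate $\Phi_n(z) \leq r^{1+\alpha} \Psi_n(z)$ forces $z_n$ into the open disc $\{|z - z_0| < r\}$ for large $n$.

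Next I would choose the scale $\rho_n > 0$ so that $g_n(\xi) := \rho_n^{-\alpha} f_n(z_n + \rho_n \xi)$ satisfies the normalization
$$
g_n^\#(0) \;=\; \frac{\rho_n^{1+\alpha}|f_n'(z_n)|}{\rho_n^{2\alpha} + |f_n(z_n)|^2} \;=\; 1,
$$
with existence of $\rho_n$ following from continuity in $\rho$, and $\rho_n \to 0$ from $\Phi_n(z_n) \to \infty$. The extremality of $z_n$ for $\Phi_n$ translates, for any $\xi$ with $z_n + \rho_n \xi \in \{|z - z_0| \leq r\}$, into the bound
$$
g_n^\#(\xi) \;\leq\; \left(\frac{r - |z_n - z_0|}{r - |z_n - z_0| - \rho_n|\xi|}\right)^{1+\alpha} \xrightarrow{n \to \infty} 1,
$$
so $\{g_n^\#\}$ is locally uniformly bounded on $\C$. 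Marty's criterion (now on $\C$) delivers a subsequence of $\{g_n\}$ converging spherically locally uniformly to a meromorphic $g:\C \to \widehat{\C}$; the normalization passes to the limit, so $g^\#(0) = 1$ (in particular $g$ is non-constant and not identically $\infty$), and the pointwise limit of the bound gives $g^\#(\xi) \leq 1 = g^\#(0)$ for all $\xi \in \C$.

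Finally, the order bound follows from the classical Clunie--Hayman theorem: a meromorphic function on $\C$ with bounded spherical derivative satisfies $T(r, g) = O(r^2)$, hence has order at most $2$. The main technical obstacle is the extremality-to-bound translation in the previous paragraph: for $\alpha = 0$, $\Phi_n(z) = (r - |z - z_0|) f_n^\#(z)$ and the desired inequality for $g_n^\#$ is immediate from $\Phi_n(z_n+\rho_n\xi) \leq \Phi_n(z_n)$; but for $\alpha \ne 0$ one must choose the companion quantity $\Psi_n$ so that the $\rho_n^{\pm \alpha}$ factors align correctly, and one must verify continuity and attainment of $\Phi_n$ with some care at the zeros and poles of $f_n$ before the maximum-principle argument can be safely invoked.
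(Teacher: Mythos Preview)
The paper does not supply a proof of this lemma at all: Lemma~\ref{L1} is quoted from Zalcman's survey \cite{Z} (in the Pang--Zalcman form allowing the parameter $\alpha\in(-1,1)$) and is used as a black box. There is therefore nothing in the paper to compare your argument against.

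On its own merits, your outline is the standard Zalcman--Pang rescaling argument and is essentially correct. The one place where you are deliberately vague---the choice of the companion quantity $\Psi_n$ for $\alpha\ne 0$ and the verification that the extremality of $z_n$ for $\Phi_n$ really yields the locally uniform bound on $g_n^\#$---is precisely the technical heart of Pang's refinement, and a complete write-up would have to specify $\Psi_n$ explicitly (for instance via the quantity $|f_n(z)|^{-\alpha}\bigl(|f_n(z)|^{\alpha}\bigr)^\#$ or an equivalent formulation) and carry the estimate through. Your closing remark about the Clunie--Hayman bound $T(r,g)=O(r^2)$ for the order is exactly the right reference; note that the paper itself only invokes the entire case (Lemma~\ref{L2}) in the subsequent Remark.
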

\begin{lemma}\cite{CH}\label{L2}
Let $g$ be an entire function and $M$ is a positive constant. If $g^{\#}(\xi)\leqslant M$ for all $\xi\in\C,$ then $g$ has the order at most one.
\end{lemma}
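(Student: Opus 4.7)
The plan is to derive the order bound via the Ahlfors--Shimizu characteristic and then refine it using the entirety of $g$. First, recall the Ahlfors--Shimizu formula: for any meromorphic function $g$ on $\C$,
$$
T_0(r,g)=\int_0^r \frac{A(t,g)}{t}\,dt,\qquad A(t,g)=\frac{1}{\pi}\iint_{|z|\leqslant t}g^\#(z)^2\,dA(z),
$$
and $T(r,g)=T_0(r,g)+O(1)$ when $g$ is entire. The pointwise hypothesis $g^\#(z)\leqslant M$ immediately gives $A(t,g)\leqslant M^2 t^2$, hence $T_0(r,g)\leqslant \tfrac{1}{2} M^2 r^2$; this furnishes the crude bound that any meromorphic function with bounded spherical derivative has order at most $2$.

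To improve the conclusion to order at most $1$ under the entirety assumption, I would introduce the auxiliary function $v(z)=(1+|g(z)|^2)^{-1/2}$. A direct computation gives
$$
|\nabla v(z)|=\frac{|g'(z)||g(z)|}{(1+|g(z)|^2)^{3/2}}=g^\#(z)\cdot\frac{|g(z)|}{\sqrt{1+|g(z)|^2}}\leqslant g^\#(z)\leqslant M,
$$
so $v$ is globally $M$-Lipschitz on $\C$, and since $g$ is entire, $v$ stays strictly positive everywhere. Marty's theorem further implies that the translate family $\{g(\,\cdot\,+w):w\in\C\}$ is normal on every bounded disk. Combining the Lipschitz structure of $v$ with the normality of translates produces the linear growth estimate $\log M(r,g)=O(r)$; since $T(r,g)=m(r,g)\leqslant \log^+ M(r,g)$ for entire $g$, we conclude $T(r,g)=O(r)$, i.e., $g$ has order at most $1$.

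The main obstacle is the final step: converting the local Lipschitz bound on $v$ (which degenerates in the regions where $v$ is small, i.e.\ where $|g|$ is large) together with the normality of translates into a genuinely global linear bound on $\log M(r,g)$. A naive radial integration of the inequality $|g'(z)|\leqslant M(1+|g(z)|^2)$ only controls $\arctan|g(z)|$, which is already bounded, and so gives no useful growth estimate by itself; it is precisely the absence of poles that closes this gap, and the detailed argument is the content of the classical Clunie--Hayman estimate \cite{CH}.
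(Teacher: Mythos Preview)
The paper does not prove this lemma; it is quoted from Clunie--Hayman \cite{CH} as a known result, so there is no in-paper argument to compare against.

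Evaluating your proposal on its own merits: there is a genuine gap, and you identify it yourself. The Ahlfors--Shimizu step giving order at most $2$ is correct and standard, and the computation showing that $v=(1+|g|^2)^{-1/2}$ is $M$-Lipschitz is also fine. But the sentence ``Combining the Lipschitz structure of $v$ with the normality of translates produces the linear growth estimate $\log M(r,g)=O(r)$'' is an assertion, not an argument, and your final paragraph concedes as much. The Lipschitz bound on $v$ is useless precisely where you need it: near a point with $|g|$ large, $v$ is already close to $0$, and a Lipschitz bound on $v$ gives no control on how fast $|g|$ can grow nearby. Normality of the translate family says only that every sequence $g(\cdot+w_n)$ has a subsequence converging locally uniformly \emph{possibly to the constant $\infty$}; it does not bound $\sup_{|z|\leqslant 1}|g(z+w)|$ in terms of $|w|$. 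And, as you note, the radial inequality $|g'|\leqslant M(1+|g|^2)$ integrates only to a bound on $\arctan|g|$, which is vacuous. None of these ingredients, alone or in the combination you describe, delivers $\log M(r,g)=O(r)$.

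What is missing is exactly the substance of the Clunie--Hayman theorem: a genuinely two-dimensional argument that exploits the absence of poles, for instance via the harmonic function $\log|g|$ on the components of $\{|g|>1\}$ together with the bound $|g'/g|=2g^{\#}\leqslant 2M$ on the level set $\{|g|=1\}$, or via a refined estimate of the Ahlfors--Shimizu integrand. As written, your proposal is an outline that stops at the decisive step and then cites \cite{CH} for that very step, so it is not a self-contained proof.
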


\noindent{\bf Remark.} 
In Lemma 1, if $\mathcal F$ is a family of holomorphic functions, then $g$ is a holomorphic function based on Hurwitz's theorem. Therefore, the order of $g$ is not greater than one according to Lemma 2.

We consider a nonconstant meromorphic function $g$ in the complex plane $\C,$ and its first $p$ derivatives. A differential polynomial $P$ of $g$ is defined by 
 $$
P(z):=\sum_{i=1}^n\alpha_i(z)\prod_{j=0}^p(g^{(j)}(z))^{S_{ij}},$$ where $S_{ij}, \ 0\leqslant i,j\leqslant n,$ are nonnegative integers, and $\alpha_i,1 \leqslant i\leqslant n$ are small meromorphic functions with respect to $g$.
Set $$d(P):=\min_{1\leqslant i\leqslant n}\sum_{j=0}^pS_{ij}\;\text{and}\; \theta(P):=\max_{1\leqslant i\leqslant n}\sum_{j=0}^pjS_{ij}.$$ 

In 2002, J. Hinchliffe \cite{Hi} generalized the theorems of Hayman \cite{Ha} and Chuang \cite{Ch} and obtained the following result.

\begin{lemma}\cite{Hi}\label{lm2.3} Let $g$ be a transcendental meromorphic function and $a$ be a nonzero complex constant, let $P$ be a
nonconstant differential polynomial in $g$ with $d(P)\geqslant 2.$ Then
\begin{align*}
T(r,g)\leqslant\frac{\theta(P)+1}{d(P)-1}\overline{N}(r,\frac{1}{g})+\frac{1}{d(P)-1}\overline{N}(r,\frac{1}{P-a})+o(T(r,g)),
\end{align*}
for all $r\in[1,+\infty)$ excluding a set of finite Lebesgues measure. When $f$ is a transcendental entire function, the above inequality becomes
\begin{align*}
T(r,g)\leqslant\frac{\theta(P)+1}{d(P)}\overline{N}(r,\frac{1}{g})+\frac{1}{d(P)}\overline{N}(r,\frac{1}{P-a})+o(T(r,g)), 
\end{align*}
for all $r\in[1,+\infty)$ excluding a set of finite Lebesgues measure. 
\end{lemma}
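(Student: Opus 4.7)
The plan is to combine a Clunie-type factorization of $P$ with Nevanlinna's first main theorem applied to $P - a$ and a careful multiplicity count at the zeros of $g$. Since every monomial of $P$ has total derivative-degree $\sigma_i := \sum_j S_{ij} \geqslant d(P)$, I would rewrite each term as $\alpha_i g^{\sigma_i} \prod_j (g^{(j)}/g)^{S_{ij}}$ and factor out the common $g^{d(P)}$, so that $P = g^{d(P)} H$, where $H$ is built from the coefficients $\alpha_i$, residual positive powers of $g$ (from higher-degree monomials), and logarithmic derivatives $g^{(j)}/g$. The weight of derivatives in $H$ is at most $\theta(P)$, and the logarithmic-derivative lemma ensures that the pure log-derivative part of $H$ contributes only $S(r,g)$ to $m(r,H)$.

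The key counting estimate comes from analyzing multiplicities at zeros of $g$. At a zero $z_0$ of $g$ of multiplicity $\mu$, each $g^{(j)}$ vanishes to order at least $\max(\mu - j, 0)$, so every monomial of $P$ vanishes to order at least $\mu\,d(P) - \theta(P)$. Summing these contributions,
\[
N(r, 1/P) \;\geqslant\; d(P)\, N(r, 1/g) \;-\; \theta(P)\,\overline{N}(r, 1/g) \;+\; O(1).
\]
Moreover, since $a \ne 0$, every zero of $g$ satisfies $(P-a)(z_0) = -a \ne 0$, so $\overline{N}(r, 1/(P-a))$ is supported off $\{g = 0\}$, and the two counting functions on the right-hand side of the claimed inequality count essentially disjoint sets.

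The next step is to invoke Nevanlinna's first main theorem for $P - a$, namely $T(r, P) + O(1) = m(r, 1/(P-a)) + N(r, 1/(P-a))$, which yields $m(r, 1/P) \leqslant T(r, P) - N(r, 1/P) + O(1)$. The factorization $1/g^{d(P)} = H/P$ then gives $d(P)\,m(r, 1/g) \leqslant m(r, H) + m(r, 1/P)$. Combining with the standard estimate $T(r, P) \leqslant (\deg P)\,T(r, g) + \theta(P)\,\overline{N}(r, g) + S(r, g)$ for differential polynomials, together with $T(r, g) = m(r, 1/g) + N(r, 1/g) + O(1)$ and the lower bound on $N(r, 1/P)$, one rearranges to isolate $T(r, g)$ on the left and the reduced counting functions $\overline{N}$ on the right.

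The principal obstacle is producing the precise coefficients $(\theta(P)+1)/(d(P)-1)$ and $1/(d(P)-1)$. This requires converting $N(r, 1/g)$ to $\overline{N}(r, 1/g)$ via the identity $N = \overline{N} + (\text{multiplicity surplus})$ and absorbing the surplus into $N(r, 1/P)$ using the counting bound above; the factor $\theta(P)+1$ appears from pooling $\theta(P)$ (weight of derivatives) with an extra $1$ coming from the conversion. The shift from $d(P)$ to $d(P)-1$ in the denominator for meromorphic $g$ reflects an $m(r, g)$-contribution attached to poles that must be traded against $T(r, g)$; in the entire case $m(r, g) = T(r, g)$ and the pole-counting function vanishes, shaving off the $-1$ and yielding the cleaner denominator $d(P)$. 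I also expect the higher-degree remainder monomials of $P$ (those with $\sigma_i > d(P)$) to generate technical nuisance in the factorization of $H$, which will likely be absorbed into the $o(T(r, g))$ error term at the end.
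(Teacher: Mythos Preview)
The paper does not prove this lemma at all; it is quoted from Hinchliffe \cite{Hi} and used as a black box. There is therefore no in-paper argument to compare your attempt against.

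Regarding your sketch on its own merits: the ingredients you list (the factorization $P=g^{d(P)}H$ with $H$ built from logarithmic derivatives, the multiplicity bound $N(r,1/P)\geqslant d(P)N(r,1/g)-\theta(P)\overline N(r,1/g)$, and the first main theorem) are the right ones, but the assembly has a genuine gap. You invoke the first main theorem for $P-a$ and then immediately write $m(r,1/P)\leqslant T(r,P)-N(r,1/P)+O(1)$, which is just the first main theorem for $P$ itself; the value $a$ and the counting function $\overline N(r,1/(P-a))$ that must appear in the conclusion never actually enter your chain of inequalities. Worse, you then propose to bound $T(r,P)$ \emph{above} by $(\deg P)\,T(r,g)+\theta(P)\,\overline N(r,g)+S(r,g)$. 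Feeding that into $d(P)\,m(r,1/g)\leqslant m(r,H)+T(r,P)-N(r,1/P)$ produces, after using $T(r,g)=m(r,1/g)+N(r,1/g)+O(1)$, an inequality in which the $T(r,g)$ terms either cancel (if $\deg P=d(P)$) or leave a nonpositive coefficient on the left (if $\deg P>d(P)$); either way no upper bound on $T(r,g)$ results. The phrase ``one rearranges to isolate $T(r,g)$'' hides exactly the step that fails.

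The missing idea is that one must compare $N(r,1/P)$ with $N(r,1/(P-a))$ directly (via $T(r,1/P)=T(r,1/(P-a))+O(1)$ together with a bound on $m(r,1/(P-a))$ coming from the differential-polynomial structure, or equivalently a Chuang-type second-main-theorem estimate), rather than passing through an upper bound for $T(r,P)$. That comparison is what injects $\overline N(r,1/(P-a))$ into the final inequality and produces the constants $(\theta(P)+1)/(d(P)-1)$. Your heuristic for why the denominator drops from $d(P)-1$ to $d(P)$ in the entire case is correct in spirit, but until the meromorphic case is actually derived there is nothing to specialize.
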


\begin{lemma}\label{lm2.4}
 Let $f$ be a transcendental meromorphic function and $a$ be a nonzero complex constant. Let $n\in \N$, $k, n_j, t_j \in \N^{*}$, $j=1,\dots,k$ satisfy
$$n+\sum_{j=1}^{k}n_j\geqslant \sum_{j=1}^{k}t_j+3.$$ 
Then the equation
$$f^n(f^{n_1})^{(t_1)}\dots(f^{n_k})^{(t_k)}=a$$ has infinite solutions.
Furthermore, if $f$ is a transcendental entire function, the statement holds when $n+\sum_{j=1}^{k}n_j\geqslant \sum_{j=1}^{k}t_j+2.$
\end{lemma}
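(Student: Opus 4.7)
The plan is to exploit Lemma \ref{lm2.3} (Hinchliffe) applied to the differential polynomial
\[
P(z) := f^n(f^{n_1})^{(t_1)}\cdots(f^{n_k})^{(t_k)}.
\]
The first step is to verify that $P$ genuinely fits the framework of Lemma \ref{lm2.3} and to compute its two numerical invariants $d(P)$ and $\theta(P)$. Expanding each factor $(f^{n_j})^{(t_j)}$ via the general Leibniz / Fa\`a di Bruno rule, every monomial produced has the form $c\cdot f^{n_j-s_j}(f')^{b_1^{(j)}}(f'')^{b_2^{(j)}}\cdots$ with $\sum_{\ell}b_\ell^{(j)}=s_j$ and $\sum_{\ell}\ell\,b_\ell^{(j)}=t_j$. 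Consequently every term in the full product has total degree (sum of the exponents on $f,f',f'',\ldots$) equal to $n+\sum_{j=1}^k n_j$ and total weight $\sum_{j=1}^k t_j$. In particular
\[
d(P)=n+\sum_{j=1}^k n_j, \qquad \theta(P)=\sum_{j=1}^k t_j,
\]
so the hypotheses of the lemma translate exactly into $d(P)\ge\theta(P)+3$ (meromorphic case) and $d(P)\ge\theta(P)+2$ (entire case). Since $f$ is transcendental, $P$ is a nonconstant differential polynomial and $d(P)\ge 2$, so Lemma \ref{lm2.3} applies.

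Next I would argue by contradiction. Suppose $P=a$ has only finitely many solutions. Then $\overline{N}(r,1/(P-a))=O(\log r)$, and since $f$ is transcendental we have $O(\log r)=o(T(r,f))$. In the meromorphic case, Lemma \ref{lm2.3} combined with the elementary bound $\overline{N}(r,1/f)\le T(r,f)+O(1)$ yields
\[
T(r,f)\le\frac{\theta(P)+1}{d(P)-1}\,T(r,f)+o(T(r,f))
\]
off a set of finite measure. The hypothesis $n+\sum n_j\ge\sum t_j+3$ forces $d(P)-1\ge\theta(P)+2$, hence the coefficient $\tfrac{\theta(P)+1}{d(P)-1}\le\tfrac{\theta(P)+1}{\theta(P)+2}<1$, which gives the contradiction $T(r,f)=o(T(r,f))$.

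For the entire case, I would use the second (sharper) inequality of Lemma \ref{lm2.3}, obtaining under $n+\sum n_j\ge\sum t_j+2$ the analogous estimate
\[
T(r,f)\le\frac{\theta(P)+1}{d(P)}\,T(r,f)+o(T(r,f)),
\]
where now $\tfrac{\theta(P)+1}{d(P)}\le\tfrac{\theta(P)+1}{\theta(P)+2}<1$, again a contradiction.

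The only step that requires genuine care is the first one: correctly identifying $P$ as a differential polynomial and verifying that the expansion produces a single homogeneous degree and a single weight, so that the constants $d(P)$ and $\theta(P)$ come out cleanly. Once this bookkeeping is done, the rest is a direct application of Hinchliffe's inequality and a size comparison against the transcendental growth of $T(r,f)$.
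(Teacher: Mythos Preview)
Your proposal is correct and follows essentially the same route as the paper: identify $P=f^n(f^{n_1})^{(t_1)}\cdots(f^{n_k})^{(t_k)}$ as a differential polynomial with $d(P)=n+\sum_j n_j$ and $\theta(P)=\sum_j t_j$, then feed these values into Hinchliffe's inequality (Lemma~\ref{lm2.3}) and use the numerical hypothesis to force infinitely many $a$-points. You supply more justification than the paper does for the computation of $d(P)$ and $\theta(P)$ and make the contradiction step explicit, but the underlying argument is identical.
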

\begin{proof} Set
$$P(f)=f^n(f^{n_1})^{(t_1)}\dots(f^{n_k})^{(t_k)}.$$
 It is easy to check 
$d(P)=n+\sum_{j=1}^kn_j$ and $\theta(P)=\sum_{j=1}^kt_j.$ Using Lemma \ref{lm2.3} with $f$ and $P(f)$, we have
\begin{align*}
T(r,f)\leqslant\frac{\sum_{j=1}^kt_j+1}{n+\sum_{j=1}^kn_j-1}\overline{N}(r,\frac{1}{f})+\frac{1}{n+\sum_{j=1}^kn_j-1}\overline{N}(r,\frac{1}{P-a})+o(T(r,f)).
\end{align*}
Since $n+\sum_{j=1}^{k}n_j\geqslant \sum_{j=1}^{k}t_j+3,$ we obtain that the equation $$f^n(f^{n_1})^{(t_1)}\dots(f^{n_k})^{(t_k)}=a$$ has infinite solutions.
Furthermore, if $f$ is a transcendental entire function, we have  
\begin{align*}
T(r,f)\leqslant\frac{\sum_{j=1}^kt_j+1}{n+\sum_{j=1}^kn_j}\overline{N}(r,\frac{1}{f})+\frac{1}{n+\sum_{j=1}^kn_j}\overline{N}(r,\frac{1}{P-a})+o(T(r,f)).
\end{align*}
So the condition $n+\sum_{j=1}^{k}n_j\geqslant \sum_{j=1}^{k}t_j+2$
implies that 
$$f^n(f^{n_1})^{(t_1)}\dots(f^{n_k})^{(t_k)}=a$$ has infinite solutions.
\end{proof} 
\begin{lemma}\label{lm2.5}
Let $f$ be a nonconstant rational function and $a$ be a nonzero complex constant. Let $n\in \N$, $k, n_j, t_j \in \N^{*}$, $j=1,\dots,k$ satisfy
$$n_j\geqslant t_j, n+\sum_{j=1}^{k}n_j\geqslant \sum_{j=1}^{k}t_j+2, j=1,\dots,k.$$ 
Then the equation
$$f^n(f^{n_1})^{(t_1)}\dots(f^{n_k})^{(t_k)}=a$$ has at least two distinct zeros.
\end{lemma}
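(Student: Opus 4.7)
\medskip
\noindent\textbf{Proof plan.}
My plan is to argue by contradiction. Write $P:=f^n(f^{n_1})^{(t_1)}\cdots(f^{n_k})^{(t_k)}$, $d:=n+\sum_{j=1}^k n_j$, and $\theta:=\sum_{j=1}^k t_j$, so the hypotheses read $d\geqslant \theta+2$ and $n_j\geqslant t_j$. Suppose $P-a$ has at most one distinct finite zero $z_0$. First, I record the local behaviour of $P$: at a finite zero of $f$ of multiplicity $p\geqslant 1$, the conditions $n_j\geqslant t_j$ and $p\geqslant 1$ force $(f^{n_j})^{(t_j)}$ to vanish to exact order $n_jp-t_j$, so $P$ has a zero of order $dp-\theta\geqslant 2$; at a finite pole of order $q$, $P$ has a pole of order $dq+\theta$; and at infinity, writing $\nu:=p-q$ (with $p,q$ the total finite zero and pole multiplicities of $f$), $P$ has order $d\nu-\theta$.

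If $f$ is a polynomial of degree $s\geqslant 1$, then $P$ is a polynomial of degree $ds-\theta\geqslant 2$, so the assumption forces $P-a=c(z-z_0)^{ds-\theta}$, whence $P'$ vanishes only at $z_0$. But every zero $\alpha$ of $f$ is a zero of $P'$ of order at least $d-\theta-1\geqslant 1$, distinct from $z_0$ since $P(\alpha)=0\neq a$; so $f$ has no zeros, impossible for a nonconstant polynomial.

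Assume now that $f$ has at least one finite pole. Let $\alpha_1,\ldots,\alpha_s$ and $\beta_1,\ldots,\beta_r$ ($r\geqslant 1$) be the distinct finite zeros and poles of $f$ with multiplicities $p_i,q_j$, and set $Q(z):=\prod_j(z-\beta_j)^{dq_j+\theta}$. Equating the total zero and pole multiplicities of $P$ on $\overline{\mathbb C}$ determines the integer $M$ in the identity $P-a=c(z-z_0)^M/Q$; differentiating gives
\[
P' \;=\; \frac{c(z-z_0)^{M-1}\bigl[\,MQ-(z-z_0)Q'\,\bigr]}{Q^2}.
\]
Since $\nu\in\mathbb Z$ and $0<\theta<d$, the integer $d\nu-\theta$ is never zero, and I split on its sign. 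If $\nu\geqslant 1$, then $P$ has a pole at infinity, the leading coefficients of $MQ$ and $(z-z_0)Q'$ differ by $d\nu-\theta>0$, and counting the pole and zero multiplicities of $P'$ on $\overline{\mathbb C}$ shows that the finite zeros of $P'$ different from $z_0$ contribute exactly $r$ units of multiplicity. Since each $\alpha_i$ is such a zero of $P'$ with multiplicity $dp_i-\theta-1$, and any ``extra'' zeros of $P$ (those not at the $\alpha_i$) only add further to the tally, $\sum_i(dp_i-\theta-1)\leqslant r$; plugging in $s\leqslant p$ and $r\leqslant q\leqslant p-1$ yields $p(d-\theta-2)\leqslant -1$, contradicting $d\geqslant\theta+2$ and $p\geqslant 2$. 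If $\nu\leqslant 0$, then $P$ (and hence $P'$) has a zero at infinity, the two leading coefficients cancel, and the analogous count gives the sharper bound $\sum_i(dp_i-\theta-1)\leqslant r-\theta-d(q-p)$; combined with $s\leqslant p\leqslant q$ and $r\leqslant q$, this becomes $q(d-\theta-2)\leqslant -\theta$, again impossible.

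The main obstacle is the rational-with-poles case: the bookkeeping at infinity, the need to treat separately the two situations in which the leading coefficient $d\nu-\theta$ of $MQ-(z-z_0)Q'$ does or does not cancel, and the verification that ``extra'' zeros of $P$ (those arising from vanishing of some $(f^{n_j})^{(t_j)}$ away from the zeros of $f$) do not disturb the tight count of remaining zeros of $P'$. The polynomial case is a short degree argument by comparison.
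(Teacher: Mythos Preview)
Your argument is correct and follows essentially the same strategy as the paper's proof: assume $P-a$ has at most one zero, write $P-a=c(z-z_0)^M/Q$, differentiate, and derive a contradiction by comparing the contribution of the zeros $\alpha_i$ of $f$ to $P'$ against the total count of ``remaining'' zeros coming from the polynomial $MQ-(z-z_0)Q'$. The paper organizes the case analysis differently (it splits first on whether $f$ has zeros and then on whether $l=(n+\sum n_j)N+(\sum t_j)t$, whereas you split on the sign of $\nu=p-q$), but the degree-counting mechanism and the resulting inequalities are the same; your packaging via zero/pole balance on $\overline{\mathbb C}$ is somewhat cleaner and absorbs the paper's separate ``$f$ has no zero'' sub-case into the $\nu\leqslant 0$ branch. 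One small imprecision: the statement that $P$ has order exactly $d\nu-\theta$ at infinity fails when $\nu=0$ (the vanishing is then strictly deeper), but your argument only needs the inequality $m\geqslant \theta+d(q-p)$, which does hold, so the conclusion is unaffected.
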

\begin{proof}
We consider some cases as following.

\noindent {\bf Case 1.} $f$ is a polynomial. Then, we see that $f^n(f^{n_1})^{(t_1)}\dots(f^{n_k})^{(t_k)}$ is a polynomial. 
We suppose that $f^n(f^{n_1})^{(t_1)}\dots(f^{n_k})^{(t_k)}-a$ has a unique zero $z_0$, so 
$$f^{n}(f^{n_1})^{(t_1)}\dots(f^{n_k})^{(t_k)}-a=A(z-z_{0})^{l},\ l\geqslant2,$$
where $A$ is a nonzero constant. Then $$(f^{n}(f^{n_1})^{(t_1)}\dots (f^{n_k})^{(t_{k})})'=Al(z-z_0)^{l-1}.$$ 
It implies that $z_0$ is the unique zero of $(f^n(f^{n_1})^{(t_1)}\dots(f^{n_k})^{(t_k)})'$. We know that any zero of $f$ is a zero of $f^n(f^{n_1})^{(t_1)}\dots(f^{n_k})^{(t_k)}$ with multiplicity at least 2, and then it is a zero of  $(f^n(f^{n_1})^{(t_1)}\dots(f^{n_k})^{(t_k)})'.$ It leads to that $z_0$ is the unique zero of $f$. We see that
$$0=f^{n}(z_{0})(f^{n_{1}})^{(t_{1})}(z_{0})\dots(f^{n_{k}})^{(t_{k})}(z_{0})=a\ne 0.$$ This is a contradiction. We 
conclude that$$f^n(f^{n_1})^{(t_1)}\dots(f^{n_k})^{(t_k)}=a$$ has at least two distinct zeros.

\noindent{\bf Case 2.} $f$ is a rational function which is not a polynomial.

\noindent {\bf Case 2.1.} $f$ has a zero. Then $f$ can be written as
\begin{align}\label{ct1}
f=A\frac{\prod_{i=1}^{s}(z-\alpha_i)^{m_i}}{\prod_{l=1}^{t}(z-\beta_l)^{d_l}} , \ {{m}_{i}}\geqslant 1, {{d}_{l}}\geqslant 1, i=1,\dots, s, l=1,\dots,t.
\end{align}
Put $M=m_1+\dots+m_s \geqslant s, \ N=d_1+\dots+d_t \geqslant t$. We have
\begin{align}\label{ct2}
f^{n_j}=A^{n_j}\frac{\prod_{i=1}^{s}(z-\alpha_i)^{n_jm_i}}{\prod_{l=1}^{t}(z-\beta_l)^{n_jd_l}},  j=1,\dots, k.
\end{align}
Hence 
\begin{align}\label{ct3}
(f^{n_j})^{(t_j)}=A^{n_j}\frac{\prod_{i=1}^{s}(z-\alpha_i)^{n_jm_i-t_j}}{\prod_{l=1}^{t}(z-\beta_l)^{n_jd_l+t_j}}g_j(z),  
\end{align}
where $g_j$ is a polynomial with $\deg g_j(z)\leqslant t_j(s+t-1)$, $j=1,\dots,k$.
Combine (\ref{ct1}), (\ref{ct2}) and (\ref{ct3}), we get
\begin{align}\label{ct4}
 f^n(f^{n_1})^{(t_1)}\dots (f^{n_k})^{(t_k)}&=
\frac{\prod_{i=1}^{s}(z-\alpha_i)^{(n+\sum_{j=1}^{k}n_j)m_i-\sum_{j=1}^{k}t_j}}
{\prod_{l=1}^{t}(z-\beta_l)^{(n+\sum_{j=1}^{k}n_j)d_l+\sum_{j=1}^{k}t_j}}g(z)\\
 & =\frac{P(z)}{Q(z)} \notag,
\end{align} 
where $g(z)=A^{n+\sum_{j=1}^{k}n_j}\prod_{v=1}^{k}g_v(z)$ with $\deg g(z)\leqslant(\sum_{j=1}^{k}t_j)(s+t-1).$

We suppose that $$f^n(f^{n_1})^{(t_1)}\dots(f^{n_k})^{(t_k)}=a$$ has a unique zero $z_0$. Then $z_0\ne \alpha _i,\text{ }i=1,\dots,s$. Indeed, if $z_0=\alpha_i$ for some $i\in\{1,\dots,s\}.$ We deduce that
$$0=f^n(z_0)(f^{n_1})^{(t_1)}(z_0)\dots(f^{n_k})^{(t_k)}(z_0)=a\ne 0.$$ 
This is a contradiction. We have
\begin{align}\label{ct5}
f^n(f^{n_1})^{(t_1)}\dots(f^{n_k})^{(t_k)}=a+\frac{B(z-z_0)^{l}}{\prod_{l=1}^{t}(z-\beta_l)
^{(n+\sum_{j=1}^{k}n_j)d_l+\sum_{j=1}^{k}t_j}},
\end{align}
where $B$ is a nonzero constant.
It implies
\begin{align}\label{ct6}
(f^{n}(f^{n_1})^{(t_1)}\dots(f^{n_k})^{(t_k)})'=\frac{(z-z_0)^{l-1}G_1(z)}{\prod_{l=1}^{t}(z-\beta_l)^
{(n+\sum_{j=1}^{k}n_j)d_l+\sum_{j=1}^{k}t_j+1}},
\end{align}
where $$G_1(z)=B(l-(n+\sum_{j=1}^{k}n_j)N-(\sum_{j=1}^{k}t_j)t)z^{t}+b_1z^{t-1}+\dots+b_t.$$ 
From (\ref{ct4}), we see
\begin{align}\label{ct97}
(f^{n}(f^{n_1})^{(t_1)}\dots(f^{n_k})^{(t_k)})'=\frac{\prod_{i=1}^{s}(z-\alpha_i)^{(n+\sum_{j=1}^{k}n_j)m_i+\sum_{j=1}^{k}t_j-1}G_2(z)}
{\prod_{l=1}^{t}(z-\beta_1)^{(n+\sum_{j=1}^{k}n_j)d_l+\sum_{j=1}^{k}t_v+1}}.
\end{align}
It is easy to test 
$$s+t-1\leqslant\deg {{G}_{2}}(z)\leqslant(\sum\limits_{j=1}^{k}{{{t}_{j}}}+1)(s+t-1).$$
We consider the following subcases:

\noindent{\bf Case 2.1.1.} $l\ne (n+\sum_{j=1}^{k}n_j)N+(\sum_{j=1}^{k}t_j)t,$ consequently
 $\deg P(z)\geqslant\deg Q(z)$. From (\ref{ct4}), we get
$$\sum_{i=1}^{s}((n+\sum_{j=1}^{k}n_j)m_{i}
-\sum_{j=1}^{k}t_j)+\deg g\geqslant\sum_{j=1}^{t}((n+\sum_{j=1}
^{k}n_j)d_{j}+\sum_{j=1}^{k}t_{j}).$$
We note that $\deg g(z)\leqslant(\sum_{j=1}^{k}t_{j})(s+t-1)$. It leads to 
$$M\geqslant N+\frac{\sum_{j=1}^{k}t_{j}}{n+\sum_{j=1}^{k}n_{j}},$$
 then $M>N.$ Since $z_0\ne\alpha_i$, for all $i=1,\dots,s$, we obtain
$$\sum_{i=1}^{s}((n+\sum_{j=1}^{k}n_j)m_i-\sum_{j=1}^{k}t_j-1)\leqslant\deg G_{1}=t.$$
Consequently,
\begin{align}\label{ct8}
(n+\sum\limits_{j=1}^{k}n_{j})M\leqslant(1+\sum\limits_{j=1}^{k}t_{j})s+t<
(\sum\limits_{j=1}^{k}t_{j}+2)M.
\end{align}
We note that $n+\sum\limits_{j=1}^{k}n_{j}\geqslant\sum\limits_{j=1}^{k}t_{j}+2,$ thus (\ref{ct8}) leads to a contradiction.

\noindent{\bf Case 2.1.2.} $l=(n+\sum\limits_{j=1}^{k}{n}_j)N+(\sum\limits_{j=1}^{k}t_{j})t.$

If $M>N$, then we have a contradiction by the argument as Case 1.  

If $M\leqslant N$. Since $$l-1\leqslant\deg {{G}_{2}}\leqslant(\sum\limits_{j=1}^{k}{{{t}_{j}}}+1)(s+t-1),$$
then
\begin{align}\label{ct9}
(n+\sum\limits_{j=1}^{k}{t{}_{j}})N&=l-(\sum\limits_{j=1}^{k}{t{}_{j}})t\leqslant\deg {{G}_{2}}+1
-(\sum\limits_{j=1}^{k}{t{}_{j}})t\notag\\
&\quad<(1+\sum\limits_{j=1}^{k}{t{}_{j}})s+t\leqslant(\sum\limits_{j=1}^{k}{t{}_{j}}+2)N.
\end{align}
Since the condition $n+\sum\limits_{j=1}^{k}n_{j}\geqslant\sum\limits_{j=1}^{k}t_{j}+2$ and (\ref{ct9}), we get a contradiction.

\noindent{\bf Case 2.2.}  $f$ has not any zero. Then $f$ can be written as 
\begin{align}\label{ct9a}
f=\frac{A}{\prod_{l=1}^{t}(z-\beta_l)^{d_l}}, \ d_l \geqslant1, \  l=1,\dots, t.
\end{align}
Thus, (\ref{ct3}) becomes
\begin{align}\label{ct9b}
(f^{n_j})^{(t_j)}=\frac{A^{n_j}}{\prod_{l=1}^{t}(z-\beta_l)^{n_jd_l+t_j}}g_j(z),
\end{align}
where $g_j$ is a polynomial with $\deg {{g}_j}(z)\leqslant{{t}_{j}}(t-1)$, $j=1,\dots, k$. We have
\begin{align}\label{ct4aa}
 f^n(f^{n_1})^{(t_1)}\dots (f^{n_k})^{(t_k)}&=\frac{g(z)}{\prod_{l=1}^{t}(z-\beta_l)^{(n+\sum_{j=1}^{k}n_j)d_l+\sum_{j=1}^{k}t_j}}\\
 & =\frac{g(z)}{Q(z)}, \notag
\end{align} 
where $g(z)=A^{n+\sum_{j=1}^{k}n_j}\prod\limits_{v=1}^{k}g_{v}(z)$ with $\deg g(z)\leqslant(\sum\limits_{j=1}^{k}t_{j})(t-1)$.
We see that
\begin{align}\label{m1}
f^n(f^{n_1})^{(t_1)}\dots (f^{n_k})^{(t_k)}-a=\dfrac{g(z)-aQ(z)}{Q(z)}.
\end{align}
Since $N=d_1+\dots+d_t\geqslant t,$ it implies
$$\deg Q\geqslant(n+\sum_{j=1}^{k}n_j+\sum_{j=1}^{k}t_j)t>\deg g,$$ thus equation (\ref{m1}) has at least one solution. We suppose that $$f^n(f^{n_1})^{(t_1)}\dots(f^{n_k})^{(t_k)}=a$$ has a unique zero $z_0$. 
We have
\begin{align}\label{ct5aa}
f^n(f^{n_1})^{(t_1)}\dots(f^{n_k})^{(t_k)}=a+\frac{B(z-z_0)^{l}}{\prod_{l=1}^{t}(z-\beta_l)
^{(n+\sum\limits_{j=1}^{k}n_j)d_l+\sum\limits_{j=1}^{k}t_j}},
\end{align}
where $B$ is a nonzero constant.
It implies that
\begin{align}\label{ct6aa}
(f^{n}(f^{n_1})^{(t_1)}\dots(f^{n_k})^{(t_k)})'=\frac{(z-z_0)^{l-1}G_1(z)}{\prod_{l=1}^{t}(z-\beta_l)^
{(n+\sum_{j=1}^{k}n_j)d_l+\sum_{j=1}^{k}t_j+1}},
\end{align}
where $$G_1(z)=B(l-(n+\sum\limits_{j=1}^{k}n_{j})N-(\sum\limits_{j=1}^{k}t_j)t)z^{t}+b_1z^{t-1}+\dots+b_{t}.$$
From (\ref{ct4aa}), we see
\begin{align}\label{ct97aa}
(f^{n}(f^{n_1})^{(t_1)}\dots(f^{n_k})^{(t_k)})'=\frac{G_2(z)}
{\prod_{l=1}^{t}(z-\beta_1)^{(n+\sum_{j=1}^{k}n_j)d_l+\sum_{j=1}^{k}t_j+1}}.
\end{align}
It is easy to test 
$$t-1\leqslant\deg {{G}_{2}}(z)\leqslant(\sum\limits_{j=1}^{k}{{{t}_{j}}}+1)(t-1).$$
We consider the following subcases:

\noindent{\bf Case 2.2.1.} $l\ne (n+\sum\limits_{j=1}^{k}n_j)N+(\sum\limits_{j=1}^{k}t_j)t,$ consequently
 $\deg g(z)\geqslant\deg Q(z)$. From (\ref{ct4aa}), we get
$$\deg g\geqslant\sum_{j=1}^{t}((n+\sum_{j=1}
^{k}n_j)d_j+\sum_{j=1}^{k}t_j)=(n+\sum_{j=1}^{k}n_j)N+(\sum_{j=1}^{k}t_j)t.$$
We note that $\deg g(z)\leqslant(\sum_{j=1}^{k}t_j)(t-1)$. This is a contradiction.

\noindent{\bf Case 2.2.2.} $l=(n+\sum\limits_{j=1}^{k}n_j)N+(\sum\limits_{j=1}^{k}t_j)t.$
Since $$l-1\leqslant\deg {{G}_{2}}\leqslant(\sum_{j=1}^{k}t_j+1)(t-1),$$
then
\begin{align}\label{ct9aa}
(n+\sum_{j=1}^{k}n_j)N&=l-(\sum_{j=1}^{k}t_j)t\leqslant\deg G_2+1
-(\sum_{j=1}^{k}t_j)t\\
&=t-\sum_{j=1}^{k}t_j. \notag
\end{align}
Since $n+\sum_{j=1}^{k}n_j\geqslant\sum_{j=1}^{k}t_j+2$ and $t \leqslant N,$ we have
$$ (\sum_{j=1}^{k}t_j+2)N+\sum_{j=1}^{k}t_j \leqslant N.$$
This is a contradiction. Thus, we obtain that 
$$f^n(f^{n_1})^{(t_1)}\dots(f^{n_k})^{(t_k)}=a$$ has at least two distinct zeros.
\end{proof}

We recall that the order $\sigma(f)$ of meromorphic function $f$ is defined by
$$\sigma(f)=\limsup_{r \to \infty} \dfrac{\log T(r,f)}{\log r}.$$
Furthermore, when $f$ is an entire function, we have  
$$ \sigma(f)=\limsup_{r \to \infty} \dfrac{\log T(r,f)}{\log r}=\limsup_{r \to \infty} \dfrac{\log \log( M(r,f))}{\log r}.$$

Let $f$ be an entire function. We know that $f$ can be expressed by the power series $f(z)=\sum_{n=0}^{\infty}a_nz^n.$
We denote by
$$\mu(r,f)=\max_{n\in \N, |z|=r}\{|a_nz^n|\}, \  \nu(r,f)=\sup\{n: |a_n|r^n=\mu(r,f)\},$$
$$M(r,f)=\max_{|z|=r}|f(z)|.$$

\begin{lemma}\cite{IL}\label{lm2.6}
 If $f$ is an entire function with the order $\sigma(f)$, then
$$\sigma(f)=\limsup_{r \to \infty} \dfrac{\log \nu(r,f)}{\log r}.$$
\end{lemma}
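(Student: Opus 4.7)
The plan is to reduce the equality $\sigma(f) = \limsup_{r\to\infty} \log \nu(r,f)/\log r$ to two standard intermediate facts about entire functions: that the maximum modulus $M(r,f)$ and the maximum term $\mu(r,f)$ have the same asymptotic behaviour on a $\log\log$-scale, and that $\log \mu(r,f)$ is comparable to $\nu(r,f)\log r$.

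First I would recall from the paragraph immediately preceding the lemma that $\sigma(f) = \limsup_{r \to \infty} \log\log M(r,f)/\log r$. The initial step is to show
$$\sigma(f) = \limsup_{r \to \infty} \frac{\log\log \mu(r,f)}{\log r}.$$
The inequality $\mu(r,f) \leq M(r,f)$ is immediate from the Cauchy estimate $|a_n|r^n \leq M(r,f)$. In the other direction, for $|z|=r$ the trivial bound $|a_n|r^n = |a_n|(2r)^n \cdot 2^{-n} \leq \mu(2r,f)\cdot 2^{-n}$ gives
$$M(r,f) \leq \sum_{n\geq 0} |a_n| r^n \leq 2\mu(2r,f),$$
so $M$ and $\mu$ produce the same value of $\limsup_{r\to\infty}\log\log(\cdot)/\log r$, by the elementary fact that replacing $r$ by $2r$ does not change that limit.

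Next I would relate $\log \mu(r,f)$ to $\nu(r,f)$. Viewed as a function of $\log r$, the quantity $\log \mu(r,f)$ is convex and piecewise linear with (right) slope $\nu(r,f)$, hence
$$\log \mu(r,f) = \log \mu(1,f) + \int_{1}^{r} \frac{\nu(t,f)}{t}\, dt$$
for all large $r$. Since $\nu(t,f)$ is non-decreasing in $t$, I obtain at once the upper bound $\log\mu(r,f) \leq \log\mu(1,f) + \nu(r,f)\log r$, and, for any $\varepsilon \in (0,1)$, the lower bound
$$\log\mu(r,f) \geq \int_{r^{1-\varepsilon}}^{r} \frac{\nu(t,f)}{t}\, dt \geq \varepsilon\,\nu(r^{1-\varepsilon},f)\log r.$$
Taking logarithms of both bounds, dividing by $\log r$, passing to $\limsup$ as $r\to\infty$, and finally letting $\varepsilon\to 0^+$, I get
$$\limsup_{r\to\infty} \frac{\log\log\mu(r,f)}{\log r} = \limsup_{r\to\infty} \frac{\log \nu(r,f)}{\log r},$$
which, combined with the reduction from $M$ to $\mu$ above, is the desired identity.

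The main technical point to handle carefully is the integral representation for $\log\mu(r,f)$: since $\nu(r,f)$ is integer-valued and has jump discontinuities, one has to justify that $\log\mu$ is (continuous and) piecewise linear in $\log r$ with slope $\nu$ between the jumps, so that the formula $d\log\mu/d\log r = \nu$ holds almost everywhere and can be integrated. Once this is set up the rest is just monotonicity of $\nu$ plus routine $\limsup$ bookkeeping.
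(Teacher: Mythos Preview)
Your argument is correct and is the standard proof of this classical fact. Note, however, that the paper does not give its own proof of this lemma: it is quoted from Laine's monograph \cite{IL} and used as a black box, so there is nothing to compare against. Your write-up would serve perfectly well as a self-contained justification; the one place to be slightly more careful is the inequality $\log\mu(r,f)\geq \int_{r^{1-\varepsilon}}^{r}\nu(t,f)/t\,dt$, which tacitly uses $\mu(r^{1-\varepsilon},f)\geq 1$ and hence needs $r$ large (as you in fact stipulate).
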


\begin{lemma}\cite{IL}\label{lm2.7}
Let $f$ be a transcendental entire function, let $0<\delta<\dfrac{1}{4}$ and $z$ be such that $|z|=r$ and that
$$|f(z)|>M(r,f)\nu(r,f)^{-\dfrac{1}{4}+\delta}$$
hold. Then there exists a set $F\subset \R_{+}$ of finite logarithmic measure, that is $\int\limits_{F}{\frac{dt}{t}}<+\infty,$ such that 
$$\dfrac{f^{(m)}(z)}{f(z)}=\biggl(\dfrac{\nu(r,f)}{z}\biggl)^{m}(1+o(1))$$
holds for all $m\geqslant 1$ and $r\not\in F.$
\end{lemma}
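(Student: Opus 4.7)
The plan is to follow the Wiman–Valiron method, analyzing the Taylor expansion $f(z)=\sum_{n=0}^\infty a_n z^n$ through its maximum term $\mu(r,f)$ and its central index $\nu(r,f)$. The guiding intuition is that, on circles $|z|=r$ outside an exceptional set of finite logarithmic measure, both $f$ and its derivatives are dominated by the single central monomial $a_\nu z^\nu$; formally differentiating this monomial yields the claimed factor $(\nu(r,f)/z)^m$.

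First I would establish the standard tail estimate: for each sufficiently small $\eta>0$ there is a set $F_\eta\subset\R_+$ of finite logarithmic measure such that, for $r\notin F_\eta$,
$$|a_n|r^n\leq\mu(r,f)\exp\bigl(-\nu(r,f)^{2\eta}\bigr)\quad\text{whenever }|n-\nu(r,f)|>\nu(r,f)^{1/2+\eta}.$$
This follows from the fact that $\log\mu(r,f)$ is a convex, piecewise linear function of $\log r$ with slope $\nu(r,f)$, combined with a Borel-type lemma applied to the monotone function $r\mapsto\nu(r,f)$. Together with the classical comparison $M(r,f)\leq\mu(r,f)\nu(r,f)^{1/4+\eta}$ valid off a further set of finite logarithmic measure, the hypothesis $|f(z)|>M(r,f)\nu(r,f)^{-1/4+\delta}$ gives $|f(z)|\geq\mu(r,f)\nu(r,f)^{\delta-\eta}$, so $|f(z)|$ is bounded below by a positive power of $\nu(r,f)$ times $\mu(r,f)$ as soon as $\eta<\delta$.

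Next I would split the expansion $f^{(m)}(z)=\sum_n n(n-1)\cdots(n-m+1)a_n z^{n-m}$ at the threshold $|n-\nu(r,f)|\leq\nu(r,f)^{1/2+\eta}$. On the central band the Pochhammer prefactor satisfies $n(n-1)\cdots(n-m+1)=\nu(r,f)^m(1+O(\nu(r,f)^{-1/2+\eta}))$, so the central contribution equals $(\nu(r,f)/z)^m\,a_\nu z^{\nu}(1+o(1))$. The tail contribution, even after inserting the polynomial factor $n^m$, is absorbed by the subexponential decay $\exp(-\nu(r,f)^{2\eta})$ of the tail estimate and is therefore $o(\mu(r,f))$, hence $o(|f(z)|)$ by the lower bound just established. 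Applying the same splitting with $m=0$ yields $f(z)=a_\nu z^{\nu}(1+o(1))$, and dividing gives $f^{(m)}(z)/f(z)=(\nu(r,f)/z)^m(1+o(1))$.

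The main obstacle is uniformity in $m$: the lemma asserts that a single exceptional set $F$ works for every $m\geq 1$. This rests on the strength of the tail estimate, whose subexponential decay $\exp(-\nu(r,f)^{2\eta})$ dominates any polynomial prefactor $n^m$ for each fixed $m$. Choosing a countable sequence $\eta_j\downarrow 0$ and taking the corresponding countable union of exceptional sets still produces a set of finite logarithmic measure, giving a single $F$ that serves every $m$ simultaneously.
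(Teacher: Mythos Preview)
The paper does not supply a proof of this lemma; it is quoted from Laine's monograph \cite{IL} and invoked as a black box in the proof of Theorem~\ref{th2}. Your sketch is the classical Wiman--Valiron argument, which is precisely the method underlying the result in that reference, so you are reconstructing the cited proof rather than proposing an alternative.

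Two small remarks on the sketch. First, the countable-union step for uniformity in $m$ is not needed: for a single fixed $\eta\in(0,\delta)$ the exceptional set $F_\eta$ already serves every $m\geq 1$, because the tail decay $\exp(-\nu(r,f)^{2\eta})$ dominates any polynomial factor $n^m$ as $r\to\infty$, and the set $F_\eta$ itself arises from a Borel-type lemma applied to $\nu(r,f)$, independently of $m$. Second, the assertion ``$f(z)=a_\nu z^\nu(1+o(1))$'' slightly overstates what the splitting delivers: the central band is a short sum over $|n-\nu|\leq\nu^{1/2+\eta}$, not a single monomial. The correct statement is that the central-band contributions to $f(z)$ and to $z^m\nu(r,f)^{-m}f^{(m)}(z)$ agree up to a factor $1+o(1)$, and dividing then gives the desired asymptotic. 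Neither point undermines the overall argument.
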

Taking $E_0(z)=1-z,$ $E_m(z)=(1-z)e^{z+z^2/2+\dots+z^m/m}, m\in \mathbb Z^{+},$ then we have a following result called the Weierstrass Factorization Theorem.
\begin{lemma}\cite{IL}\label{lm2.8}
Let $f$ be an entire function, with a zero multiplicity $m\geqslant0$ at $z=0.$ Let the other zeros of $f$ be at $a_1, a_2, \dots,$ each zero being repeated as many times as its multiplicity implies. Then $f$ has the representation
$$ f(z)=e^{g(z)}z^m\prod_{n=1}^{\infty}E_{m_n}\big(\dfrac{z}{a_n}\big),$$
for some entire function $g$ and some integers $m_n.$ If $\{a_n\}_{n\in \mathbb N}$ has a finite exponent of convergence $\lambda,$ then $m_n$ may be taken as $k=[\lambda]>\lambda-1.$ Furthermore, if $f$ has finite order $\rho,$ then $g$ is a polynomial with degree at most $\rho.$
\end{lemma}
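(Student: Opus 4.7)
The plan is to build a canonical product with the prescribed zeros, divide it into $f$ to obtain a nowhere-vanishing entire function, and then exponentiate. I would begin with the standard estimate $|1 - E_m(z)| \leqslant |z|^{m+1}$ for $|z| \leqslant 1$, which is the whole reason for introducing the elementary factors $E_m$. Picking integers $m_n$ so that $\sum_{n=1}^{\infty} (r/|a_n|)^{m_n+1}$ converges for every $r > 0$ then guarantees that the infinite product $\prod_{n=1}^{\infty} E_{m_n}(z/a_n)$ converges uniformly on compact subsets of $\C$ to an entire function whose zero set, with multiplicities, is exactly $\{a_n\}$. Since $|a_n| \to \infty$ (the zeros of an entire function cannot accumulate in $\C$), one may always take $m_n = n - 1$, giving the bare existence part.

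For the exponent-of-convergence assertion, I would use the definition: $\{a_n\}$ has finite exponent of convergence $\lambda$ means $\sum_n |a_n|^{-s} < \infty$ precisely for $s > \lambda$. Choosing $m_n$ equal to the fixed integer $k = [\lambda]$ gives $m_n + 1 = k + 1 > \lambda$, so the series $\sum_n (r/|a_n|)^{k+1}$ converges for every $r$, which justifies this sharper choice. Writing $P(z) = z^m \prod_n E_k(z/a_n)$, the quotient $f(z)/P(z)$ is entire and nowhere zero, hence of the form $e^{g(z)}$ for some entire function $g$, which proves the first half of the statement.

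The main obstacle is the Hadamard upgrade: when $f$ has finite order $\rho$, showing $g$ is a polynomial of degree at most $\rho$. My plan is to apply the Borel--Carath\'eodory inequality to $g$ on large circles $|z| = R$, which requires two-sided control of $\mathrm{Re}\,g(z) = \log|f(z)| - \log|P(z)|$. The upper bound $\log|f(z)| \leqslant r^{\rho + \varepsilon}$ comes from finite order; the lower bound on $\log|P(z)|$ is the delicate step and demands a minimum modulus estimate for the canonical product. I would obtain this by removing small disks of radius $|a_n|^{-\rho - \varepsilon}$ around each zero and choosing a sequence of radii $R_j \to \infty$ avoiding these excluded disks (this is possible because the total radial measure of the disks is finite). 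Expanding $g(z) = \sum_{j \geqslant 0} c_j z^j$ and applying Borel--Carath\'eodory then yields $|c_j| R^j \leqslant C R^{\rho + \varepsilon}$ along the chosen radii, so $c_j = 0$ for $j > \rho$, giving the desired polynomial bound. The convergence and exponentiation steps are routine once the elementary estimates for $E_m$ are in hand; all the real work is in this final minimum-modulus-plus-Borel--Carath\'eodory argument.
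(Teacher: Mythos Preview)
The paper does not prove this lemma at all: it is quoted verbatim from Laine's textbook \cite{IL} and used as a black box (specifically, only the last sentence is invoked, in the proof of Theorem~\ref{th3}, Case~2, to conclude that a zero-free entire function of order at most one has the form $e^{P(z)}$ with $\deg P\leqslant 1$). So there is no ``paper's own proof'' to compare against.

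That said, your sketch is the standard Weierstrass--Hadamard argument and is essentially correct. One small remark: in the exponent-of-convergence step you should note the edge case where $\lambda$ itself is an integer and $\sum |a_n|^{-\lambda}$ happens to converge; some authors then allow $k=\lambda-1$, but taking $k=[\lambda]$ as you do is always safe since $k+1>\lambda$ guarantees convergence regardless. For the Hadamard part, your plan (minimum-modulus estimate on the canonical product away from small excluded disks, then Borel--Carath\'eodory to bound the Taylor coefficients of $g$) is exactly the classical route; the only thing to be careful about is that the order of the canonical product $P$ is at most $\lambda\leqslant\rho$, which you need so that $\log|P(z)|$ is also $O(r^{\rho+\varepsilon})$ from above before you invoke the lower bound.
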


\section{Proofs of Theorems}
\def\theequation{3.\arabic{equation}}
\setcounter{equation}{0} 

\begin{proof}[Proof of Theorem \ref{th3}]

Without loss of the generality, we may assume that $D$ is the unit disc. Suppose that $\mathcal F$ is not normal at $z_0\in D.$ Using Lemma \ref{L1} with $\alpha=\dfrac{\sum_{j=1}^{k}t_j}{n+\sum_{j=1}^{k}n_j},$ we have
$$g_v(\xi)=\frac{f_v(z_v+\rho_v\xi)}{\rho_v^{\alpha}}\to g(\xi)$$ spherically uniformly on compact subsets of $\C,$ where $g(\xi)$ is a non-constant meromorphic function. It implies that
\begin{align*}
f_v^n(z_{v}+\rho _{v}\xi )&(f_v^{n_1})^{(t_1)}(z_{v}+\rho_{v}\xi )
\dots(f_v^{n_k})^{(t_{k})}(z_{v}+\rho_{v}\xi )-b\\ 
&=g_v^{n}(\xi )(g_v^{n_1}(\xi ))^{(t_1)}\dots (g_v^{n_{k}}(\xi ))^{(t_{k})}-b.
\end{align*}
Then we see that
\begin{align}\label{ctm1}
f_{v}^{n}(z_{v}+\rho_{v}\xi )&(f_{v}^{n_1})^{(t_1)}(z_v+\rho_v\xi )
\dots(f_v^{n_k})^{(t_k)}(z_v+\rho_v\xi )-b\\
&\to g^{n}(\xi )({g^{n_{1}}(\xi )})^{(t_{1})}\dots ({g^{n_{k}}(\xi )})^{(t_{k})}-b \notag
\end{align}
 uniform (with metric spherical) on each compact subset of $\C \setminus \{\text{pole } g\}.$

We consider two cases:

\noindent {\bf Case 1.} $a\ne 0$. Let $M$ be a positive constant such that $M\leqslant |a|^{\dfrac{1}{n+n_1+\dots+n_k}}.$ For each $f\in \mathcal F,$ we denote $E_f$ by
$${{E}_{f}}= \Big\{z\in D: f^{n}(f^{n_1})^{(t_1)}\dots(f^{n_k})^{(t_k)}=b\Big\}.$$
Then $|f(z)|\geqslant M$ for any $f\in\mathcal F$ whenever $z\in E_f.$ 

We see that the equation 
\begin{align}\label{ct00}
g^{n}(\xi )({g^{n_{1}}(\xi )})^{(t_{1})}\dots ({g^{n_{k}}(\xi )})^{(t_{k})}=b
\end{align}
has at least a zero $\xi_0$. Indeed, we consider some following subcases.

\noindent {\bf Case 1.1.} $g$ is a meromorphic function. 

If $g$ is a transcendental meromorphic function, we see that the
 equation (\ref{ct00}) has infinite solutions by Lemma \ref{lm2.4}. If $g$ is a rational function, the equation (\ref{ct00}) has at least one zero by Lemma \ref{lm2.5}.

\noindent{\bf Case 1.2.} $g$ is an entire function.

\noindent{\bf Case 1.2.1.} If $g$ is a transcendental entire function.

If $n=0, k=1, n_1= t_1+1$ (see \cite{HW}) and $n_1\geqslant t_1+2$ (by Lemma \ref{lm2.4} and Lemma \ref{lm2.5}), then $(g^{n_1})^{t_1}-b$
 has infinite zeros.

If $ n\geqslant1 \hspace{0.2cm} \text{or} \hspace{0.2cm} k\geqslant2, n_j\geqslant t_j, n+\sum_{j=1}^{k}n_j\geqslant \sum_{j=1}^{k}t_j+2$, by Lemma \ref{lm2.4}, we get that (\ref{ct00}) has infinite 
solutions.

\noindent {\bf Case 1.2.2.} If $g$ is a polynomial. Since $k,n,n_j,t_j$ satisfy the assumption of Theorem 3, then equation (\ref{ct00}) has at least one solution.

To sum up, there exists $\xi_0\in \mathbb C$ satisfying
\begin{align}\label{ct00a}
g^{n}(\xi_0 )({g^{n_{1}}})^{(t_{1})}(\xi_0 )\dots ({g^{n_{k}}})^{(t_{k})}(\xi_0 )=b.
\end{align}
We see that $g(\xi_0)\ne 0, \infty$, so $g_v(\xi)$ converges uniformly to $g(\xi)$ in a neighborhood of $\xi_0$. From (\ref{ctm1}) and Hurwitz's theorem, there exists a sequence ${{\xi }_{v}}\to {{\xi }_{0}}$ such that 
\begin{align*}
 f_{v}^{n}(z_v+\rho_v\xi_v)(f_v^{n_1})^{(t_1)}(z_v
+\rho_v\xi_v)\dots(f_v^{n_k})^{(t_k)}(z_v+\rho_v\xi_v) =b
\end{align*}
for any large number $v$ and ${{\zeta }_{v}}={{z}_{v}}+{{\rho }_{v}}{{\xi }_{v}},$ so $\zeta_v \in E_{f_v}$. It implies that
\begin{align}\label{ct32}
|g_v(\xi_v)|=\dfrac{|f_v(\zeta_v)|}{{\rho }_{v}^{\alpha}}\geqslant \dfrac{M}{{\rho }_{v}^{\alpha}}.
\end{align}
Since $\xi_0$ is not a pole of $g$, then $g(\xi)$ is bounded in a neighborhood $\xi_0$. Taking $v \to \infty$ in (\ref{ct32}), we 
get a contradiction.

\noindent {\bf Case 2.} $a=0$. For any $f \in F,$ if there exists $z_0\in \C$ such that $f(z_0)=0$, then $$f^n(z_0)(f^{n_1})^{(t_1)}(z_0)\dots(f^{n_k})^{(t_k)}(z_0)=0.$$
 Since $b\ne0$, it is a contradiction. Hence $f\ne 0.$ Furthermore, if $$f^n(z_0)(f^{n_1})^{(t_1)}(z_0)\dots(f^{n_k})^{(t_k)}(z_0)=b,$$ for some $z_0\in D$ then $f(z_0)^{n+n_1+\dots+n_k}=0$, so $f(z_0)=0$, thus $b=0$. It is still a contradiction. 

Hence $f\ne 0$ and $f^n(f^{n_1})^{(t_1)}\dots(f^{n_k})^{(t_k)}\ne b$ for all $f\in \mathcal F.$ By Hurwitz's theorem, we have 
$g\ne 0$, $g^n(g^{n_1})^{(t_1)}\dots(g^{n_k})^{(t_k)}\ne b$ or
 $$g^n(g^{n_1})^{(t_1)}\dots(g^{n_k})^{(t_k)}\equiv b.$$

If $g^n(g^{n_1})^{(t_1)}\dots(g^{n_k})^{(t_k)}\equiv b.$ By Lemma~\ref{L2}, order of $g$ is at most 1. So we have $g(z)=e^{P(z)}$ by Lemma \ref{lm2.8}, where $P$ is a polynomial with degree at most $1$. Thus $g(\xi)=e^{c\xi+d}$, where $c$ is a nonzero constant. It implies that
$$g^{n}(\xi )(g^{n_1}(\xi ))^{(t_1)}\dots(g^{n_k}(\xi ))^{(t_k)}=(n_1c)^{t_1}\dots(n_kc)^{t_k}e^{(n+\sum\limits_{j=1}^{k}n_j)c\xi +(n+\sum\limits_{j=1}^{k}n_j)d}\equiv b.$$
This is a contradiction. Hence
\begin{align}\label{m11}
 g^n(g^{n_1})^{(t_1)}\dots(g^{n_k})^{(t_k)}\ne b.
\end{align}
We consider two subcases as following:

\noindent{\bf Case 2.1.} $g$ is a meromorphic function. Since the condition 
$$n_j\geqslant t_j, n+\sum_{j=1}^{k}n_j\geqslant \sum_{j=1}^{k}t_j+3,$$
we get that $g^n(g^{n_1})^{(t_1)}\dots(g^{n_k})^{(t_k)}- b$ has a zero by Lemma \ref{lm2.4} and Lemma \ref{lm2.5}. It contradicts with (\ref{m11}).

\noindent{\bf Case 2.2.} If $g$ a transcendental entire function (note that $g\ne 0$). The first, $n=0, k=1, n_1=t_1+1$ (see \cite{HW}) and
 $n_1\geqslant t_1+2$ (by Lemma \ref{lm2.4} and Lemma \ref{lm2.5}),  then $(g^{n_1})^{t_1}-b$ has a zero.
The second,  $ n\geqslant 1 \hspace{0.2cm} \text{or} \hspace{0.2cm} k\geqslant 2, n_j\geqslant t_j, 
n+\sum_{j=1}^{k}n_j\geqslant \sum_{j=1}^{k}t_j+2$, by Lemma \ref{lm2.4}, we get that
 $g^n(g^{n_1})^{(t_1)}\dots(g^{n_k})^{(t_k)}- b$ has a zero. It contradicts with (\ref{m11}).  If $g$ is a polynomial, since $k,n,n_j,t_j$ satisfy the assumption of Theorem \ref{th3}, then $g^n(g^{n_1})^{(t_1)}\dots(g^{n_k})^{(t_k)}- b$ has a zero. This is a contradiction by (\ref{m11}). Hence, Theorem \ref{th3} is proved.
\end{proof}

\begin{proof}[Proof of Theorem \ref{th2}]
Put $$\mathcal F=\{g_{\omega}(z)=f(z+\omega),\omega\in\C\}, \ z\in D=\Delta,$$
where $\Delta$ is the unit disk. Using Theorem \ref{th3}, we have the family $\mathcal F$ is normal in $D$. Hence, there exists a constant $M>0$ such that
$$ f^{\#}(\omega)=\dfrac{|f'(\omega)|}{1+|f(\omega)|^2}=\dfrac{|g'_{\omega}(0)|}{1+|g_{\omega}(0)|^2}\leqslant M,$$
for all $\omega\in\C$. By Lemma \ref{L2}, order of $f$ is at most 1. Since the condition 
$$f^{n+n_1+\dots+n_k}=a\rightleftharpoons f^n(f^{n_1})^{(t_1)}\dots(f^{n_k})^{(t_k)}=b,$$
$f$ must be a transcendental entire and
\begin{align}\label{ct33}
\dfrac{f^n(f^{n_1})^{(t_1)}\dots(f^{n_k})^{(t_k)}-b}{f^{n+n_1+\dots+n_k}-a}=e^{\alpha(z)}.
\end{align}
From (\ref{ct33}), we have $$T(r, e^{\alpha(z)})=O(T(r,f)).$$ 
So $\sigma(e^{\alpha})\leqslant \sigma(f)\leqslant 1$. It implies that $\alpha(z)$ must be a polynomial and $\deg(\alpha)\leqslant 1.$ Since $f$ is a transcendental entire, 
$M(r,f)\to\infty$ as $r\to\infty$. Let $$M(r_n,f)=|f(z_n)|,$$ where $z_n=r_ne^{i\theta_n}$, $\theta_n \in [0,2\pi)$, $|z_n|=r_n.$ 
We see that
\begin{align}\label{ct34}
\underset{{{r}_{n}}\to \infty }{\mathop{\lim }}\,\frac{1}{|f(z{}_{n})|}=\underset{{{r}_{n}}\to \infty }{\mathop{\lim }}\,\frac{1}{M({{r}_{n}},f)}=0.
\end{align}
By Lemma \ref{lm2.7}, there exists a set $F\subset \R_{+}$ of finite logarithmic measure such that 
\begin{align}\label{ct35}
\dfrac{f^{(m)}(z)}{f(z)}=\biggl(\dfrac{\nu(r,f)}{z}\biggl)^{m}(1+o(1))
\end{align}
holds for all $m\geqslant 1$ and $r\not\in F.$
By computing simply, we have
\begin{align}\label{ct36}
(f^n)^{(k)}=\sum c_{m_{0},m_{1},\dots, m_{k}}f^{m_{0}}(f')^{m_1}\dots (f^{(k)})^{m_k},
\end{align}
where $c_{m_{0},m_{1},\dots,m_{k}}$ are constants, and $m_0, m_1,\dots, m_k$ are nonnegative integers such that $m_0+m_1+\dots+m_k=n$, $\sum_{j=1}^{k}jm_j=k.$

From (\ref{ct36}), we have
$$\dfrac{(f^n)^{(k)}}{f^n}=\sum {c_{m_{0},m_{1},\dots, m_{k}}}\dfrac{f^{m_{0}}}{f^{m_0}}\dfrac{(f')^{m_1}}{f^{m_1}}\dots \dfrac{(f^{(k)})^{m_k}}{f^{m_k}}.$$
It implies
\begin{align}\label{ct37}
\dfrac{(f^n)^{(k)}(z_j)}{f^n(z_j)}&=\sum {c_{m_{0},m_{1},\dots, m_{k}}}\dfrac{(f')^{m_1}(z_j)}{f^{m_1}(z_j)}\dots \dfrac{(f^{(k)})^{m_k}(z_j)}{f^{m_k}(z_j)}\\
&=\sum {c_{m_{0},m_{1},\dots, m_{k}}}\biggl(\dfrac{\nu(r_j,f)}{z_j}\biggl)^{m_1+\dots+m_k}(1+o(1)).\notag
\end{align}
From (\ref{ct33}), we have 
\begin{align}\label{ct38}
\frac{\frac{(f^{n_1})^{(t_1)}\dots(f^{n_k})^{(t_k)}}{f^{n_1}\dots f^{n_k}}-\frac{b}{f^{n+n_1+\dots+n_k}}}{1-\frac{a}{f^{n+n_1+\dots+n_k}}}=e^{\alpha (z)}.
\end{align}
Apply (\ref{ct37}) to (\ref{ct38}), (\ref{ct35}) and Lemma \ref{lm2.6}, we get
\begin{align}\label{ct39}
|\alpha(z_n)|&=|\log e^{\alpha(z_n)}|=\Big|\log \frac{\frac{(f^{n_1})^{(t_1)}(z_n)\dots(f^{n_k})^{(t_k)}(z_n)}{f^{n_1}(z_n)\dots f^{n_k}(z_n)}-\frac{b}{f^{n+n_1+\dots+n_k}(z_n)}}{1-\frac{a}{f^{n+n_1+\dots+n_k}(z_n)}}\Big|\\
&\leqslant O(\log \nu(r_n,f))+O(\log r_n) +O(1)\notag\\
&=O(\log r_n),\notag
\end{align}
as $r_n\to \infty.$ From (\ref{ct39}), we obtain that $\alpha(z)$ is a constant because $\alpha(z)$ is a polynomial. By the equality (\ref{ct33}), we have
$$\dfrac{f^n(f^{n_1})^{(t_1)}\dots(f^{n_k})^{(t_k)}-b}{f^{n+n_1+\dots+n_k}-a}=c.$$
If $a=b$, we now show the existence of $\xi_0$ such that
$$f^n(\xi_0)(f^{n_1})^{(t_1)}(\xi_0)\dots(f^{n_k})^{(t_k)}(\xi_0)=b.$$
Since $f$ is a transcendental entire function, so if $n=0, k=1, n_1=t_1+1$ (see  \cite{HW}) and $n_1\geqslant t_1+2$
 (by Lemma \ref{lm2.4} and Lemma \ref{lm2.5}), then $f^n(f^{n_1})^{(t_1)}\dots(f^{n_k})^{(t_k)}-b$ has infinite zeros. Hence, $\xi_0$ definitely exists. If $n\geqslant1 \hspace{0.2cm} \text{or}\hspace{0.2cm}  k\geqslant 2$, from the condition $n+\sum_{j=1}^{k}n_j\geqslant \sum_{j=1}^{k}t_j+2$ and Lemma \ref{lm2.4}, we have 
$$f^n(f^{n_1})^{(t_1)}\dots(f^{n_k})^{(t_k)}-b$$  
have infinite zeros. Then we obtain the number $\xi_0$ satisfying 
$$f^n(\xi_0)(f^{n_1})^{(t_1)}(\xi_0)\dots(f^{n_k})^{(t_k)}(\xi_0)=b$$
with multiplicity $m\geqslant1$. By hypothesis, we see that $\xi_0$ is  a zero of $f^{n+n_1+\dots+n_k}-b$ with multiplicity $m.$ It implies that  

$$1=\dfrac{f^n(\xi_0)(f^{n_1})^{(t_1)}(\xi_0)\dots(f^{n_k})^{(t_k)}(\xi_0)-b}{f^{n+n_1+\dots+n_k}(\xi_0)-b}=c.$$
So $f^n(f^{n_1})^{(t_1)}\dots(f^{n_k})^{(t_k)}=f^{n+n_1+\dots+n_k},$ then $f$ has not zeros and order of $f$ has at most 1. It implies that $f=c_1e^{tz}$, where $c_1$ and $t$ are nonzero constants and $t$ is satisfied $(tn_1)^{t_1}\dots(tn_k)^{t_k}=1$.
\end{proof}

\noindent{\bf Acknowledgement.} The authors wish to thank the managing editor and referees for their very
helpful comments and useful suggestions. The present research was supported by the Vietnam National Foundation for Science and Technology Development
(NAFOSTED). Both authors would like to thank the Vietnam Institute for Advanced Study in Mathematics for financial support.


\begin{thebibliography}{x}



\bibitem {B} \textsc{Br\"uck, R.:}  {\it On entire functions which share one value CM with their first derivatives,}
Results Math. 30(1996), 21-24.

\bibitem {Ch} \textsc{Chuang, C. T.:} {\it On differential polynomials,}  Analysis of One Complex Variable, World
Sci. Publishing, Singapore, 1987, 12-32.

\bibitem{CH} \textsc{Clunie, J.--Hayman, W. K.:} {\it The spherical derivative of integral and meromorphic functions,} Commentarii Mathematici Helvetici. 40(1966), 117-148.

\bibitem{CS} \textsc{Chen, Z. X.-- Shon, K. H.:} {\it On conjecture of R. Br\"uck concerning the entire function
sharing one value CM with its derivative,} Taiwan. J. Math. 8(2004), 235-244.

\bibitem{DTT} \textsc{Dethloff, G.--Tan, T. V.--Thin, N.V.:} {\it Normal criteria for families of meromorphic functions,}
   J. Math. Anal. Appl. 411(2014), 675-683.

\bibitem{GY} \textsc{Gundersen, G. G.--Yang, L. Z.:} {\it Entire functions that share one value with one or
two of their derivatives,} J. Math. Anal. Appl. 223(1998), 88-95.

\bibitem{Ha} \textsc{Hayman, W. K.:} Meromorphic Functions, Clarendon Press, Oxford, 1964.

\bibitem{Hi} \textsc{Hinchliffe, J. D.:} {\it On a result of Chuang related to Hayman's Alternative,} Comput. Method. Funct. Theory. 2(2002), 293-297.

\bibitem{HW} \textsc{Hennekemper, W.:} {\it  \"Uber die wertverteilung von $(f^{k+1})^{k}$,} Math. Z. 177(1981), 375-380.


\bibitem{IL} \textsc{Laine, I.:} Nevanlinna Theory and Complex Differential Equations, Walter de Gruyter, Berlin, New York, 1993.

\bibitem{WS} \textsc{Schwick, W.:} {\it Normality criteria for normal families of meromorphic function,} J. Anal. Math. 52(1989), 241-289.

\bibitem{Z}  \textsc{Zalcman, L.:} {\it Normal families: New perspective,} Bull. Amer. Mat. Soc. 35(1998), 215-230.


\bibitem{ZXY} \textsc{Zhang, X. B.-- Xu, J. F. --Yi, H. X.:} {\it Normality criteria of Lahiri's type and their applications,} Journal of Inequalities and Applications. 2011, Article ID 873184, 16 page.


\bibitem{YZ}  \textsc{Yang, L. Z.--Zhang, J. L.:} {\it Non-existence of meromorphic solutions of a Fermat type functional equation,} Aequationes Mathematicae. 76(2008), 140-150.
\end{thebibliography}
\end{document}